\def\overset#1#2{{\mathrel{\mathop {{#2}_{}}\limits^{#1}}}}
\def\underset#1#2{{\mathrel{\mathop {{}_{} {#2}}\limits_{{#1}_{}}}}}
\def\upplim_#1{\underset{#1}{\overline\lim}\;}
\def\lowlim_#1{\underset{#1}{\underline\lim}\;}
\newtheorem{corollary}[equation]{Corollary}
\newtheorem{definition}[equation]{{\sc Definition}\rm }
\newtheorem{claim}[equation]{\indent{\it Claim}\rm }
\newtheorem{lemma}[equation]{Lemma}
\newtheorem{proposition}[equation]{Proposition}
\newtheorem{remark}[equation]{Remark}
\newtheorem{theorem}[equation]{Theorem}
\newcommand{\C}{{\mathbb{C}}}
\newcommand{\N}{\mathbb{N}}
\renewcommand{\P}{{\mathbb{P}}}
\newcommand{\B}{{\mathbb{B}}}
\newcommand{\R}{{\mathbb{R}}}
\newcommand{\Z}{\mathbb{Z}}
\numberwithin{equation}{section}
\title[Meromorphic mappings into projective varieties]{Meromorphic mappings into projective varieties with arbitrary families of moving hypersurfaces} 
\author{Si Duc Quang}
\begin{document}

\begin{abstract}
In this paper, we prove a general second main theorem for meromorphic mappings into a subvariety $V$ of $\P^N(\C)$ with an arbitrary family of moving hypersurfaces. Our second main theorem generalizes and improves all previous results for meromorphic mappings with moving hypersurfaces, in particular for meromorphic  mappings and families of moving hypersurfaces in subgeneral position. The method of our proof is different from that of previous authors used for the case of moving hypersurfaces. 
\end{abstract}

\def\thefootnote{\empty}
\footnotetext{
2010 Mathematics Subject Classification:
Primary 32H30, 32A22; Secondary 30D35.\\
\hskip8pt Key words and phrases: Nevanlinna theory; second main theorem; meromorphic mapping; hypersurface; homogeneous polynomial; subgeneral position.}

\maketitle

\section{Introduction}

In 1926, R. Nevanlinna \cite{N} initially established a second main theorem for meromorphic functions and fixed values in the complex plane $\C$. Later on, in 1930 H. Cartan \cite{Ca} extended the result of Nevanlinna to the case of linearly nondegenerate meromorphic mappings from $\C^m$ into $\P^N(\C)$ with hyperplanes in general position. By introducing the notion of Nochka's weights, E. Nochka \cite{Noc83} established the second main theorem for such mappings with families of hyperplanes in subgeneral position. 

In some few recent years, the second main theorem for hypersurfaces (fixed or moving) is studied intensively by many authors. Firstly, In 1992, A. E. Eremenko and M. L. Sodin \cite{ES} proved a second main theorem for arbitrary holomorphic curve from $\C$ into $\P^N(\C)$ intersecting $q$ hypersurfaces $\{Q_i\}_{i=1}^q$ in general position. In 2004 M. Ru \cite{Ru04} considered the case of algebraically nondegenerate meromorphic mappings from $\C^m$ into $\P^N(\C)$ with families of hypersurfaces in general position by using the filtration of the vector space of homogeneous polynomial of P. Corvaja and U. Zannier \cite{CZ}. Then, in \cite{Ru09} he generalized his results to the case of algebraically nondegenerate meromorphic mapping into a projective subvariety of $\P^N(\C)$ by using the method in Diophantine approximation proposed by J. Evertse and R. Ferretti \cite{EF1}. His results are generalized by S. D. Quang for the case of families of hypersurfaces in subgeneral position in \cite{Q19}, where he proposed the replacing hypersurfaces method to avoid using Nochka's weights. 

On the other hand, the results of M. Ru also generalized to the case of moving hypersurfaces by G. Dethloff and T. V. Tan \cite{DT1,DT2}. In oder to establish the second main theorem for the case of moving hypersurfaces in general position, G. Dethloff and T. V. Tan have constructed a new filtration instead of the filtration of Corvaja and Zannier. Also in 2018, S. D. Quang \cite{Q18} established the second main theorem for the case of moving hypersurfaces in subgeneral position. Recently Q. Yan and G. Yu \cite{YY} used the methods of \cite{Q18,Q19,DT2} to give a second main theorem for nonconstant meromorphic mappings, but their second main theorem does not give the normal form of Cartan's theorem as usual. Here, we will state here the normal form of the second main theorems given by Dethloff-Tan and Quang as follows. 

We recall the following. Denote by $\mathcal M$ the field of all meromorphic functions on $\C^m$. We call a moving hypersurface of degree $d$ in $\P^N(\C)$ a homogeneous polynomial $Q\in\mathcal M[x_0,\ldots,x_N]$ of the form:
$$ Q(z)({\bf x})=\sum_{I\in\mathcal T_{d}}a_{I}(z){\bf x}^I,$$
where $\mathcal T_d$ is the set of all $n$-tuples $(i_0,\ldots ,i_N)$ with $i_0+\cdots+ i_N=d$ and $i_j\ge 0$ for all $j$, $a_{I}\in\mathcal M\ (I\in\mathcal T_{d})$ are not all zeros, ${\bf x}=(x_0,\ldots,x_N)$, ${\bf x}^I=x_0^{i_0}\cdots x_N^{i_N}$ for all $I\in\mathcal T_d$. 
For each $z$, which is not pole of any $a_{I}$ and is not a common zero of all $a_{I}\ (I\in\mathcal T_{d})$, we denote by $Q(z)^*$ the support of $Q(z)$, i.e.,
$$ Q(z)^*=\{(x_0:\cdots:x_N)\in\P^N(\C)|\sum_{I\in\mathcal T_{d}}a_{I}(z){\bf x}^I=0\}.$$
We may consider $Q$ as a meromorphic mapping into $\P^{\binom{N+d}{N}-1}(\C)$ with a reduced representation $(\cdots :ha_{I}:\cdots )$ and denote by $T_{Q}(r)$ its characteristic function, where $h$ is a suitable meromorphic function.
The moving hypersurface $Q$ is said to be slow with respect to a meromorphic mapping $f$ from $\C^m$ into $\P^N(\C)$ if $\| T_{a_{I}}(r)=o(T_f(r))\ \forall I\in\mathcal T_d$. This implies that $\| T_{Q}(r)=o(T_f(r))$. Here the notion ``$\| P$'' means the assertion $P$ holds for all $r\in (0,+\infty)$ outside a finite Borel measure set.

Let $\{Q_i\}_{i=1}^q$ be a family of moving hypersurfaces in $\P^N(\C)$, $\deg Q_i=d_i$. Assume that
$$ Q_i(z)({\bf x})=\sum_{I\in\mathcal T_{d_i}}a_{iI}(z){\bf x}^I.$$

Denote by ${\mathcal K}_{\{Q_i\}_{i=1}^q}$ the smallest subfield of $\mathcal M$ (the field of all meromorphic functions on $\C^m$) which contains $\C$ and all $\frac{a_{iI}}{a_{iJ}}$ with $a_{iJ}\not\equiv 0$.

Let $V$ be a subvariety of $\P^N(\C)$ of dimension $n$. We say that $\{Q_i\}_{i=1}^q$ is in weakly general position in $V$ if there exists $z \in \C^m$ such that all $a_{iI}\ (1\le i\le q,\ I\in\mathcal I)$ are holomorphic at $z$, and 
$$ V\cap\bigcap_{j=0}^nQ_{i_j}(z)^*=\varnothing.$$

\vskip0.2cm
\noindent
\textbf{Theorem A} (see \cite{DT2}).
{\it Let $f$ be a holomorphic curve from $\C$ into a projective subvariety $V$ in $\P^N(\mathbf{C})$ of dimension $n$ with a reduced representation ${\bf f}=(f_0,\ldots,f_N)$. Let $\{Q_i\}_{i=1}^q$ be a set of slowly (with respect to $f$) moving hypersurfaces in weakly general position with respect to $V$ and $\deg Q_i = d_i\ (1\le i\le q).$ Assume that $f$ is algebraically nondegenerate over $\mathcal K_{\{Q_i\}_{i=1}^q}$.  Then for any $\epsilon >0$,
$$  \|\ (q-n-1-\epsilon)T_f(r)\le \sum_{i=1}^{q}\dfrac{1}{d_i}N_{Q_i({\bf f})}(r).$$}

In 2018, by using replacing hypersurface method, S. D. Quang \cite{Q18} considered the case of moving hypersurfaces in subgeneral position and got the following second main theorem.

\vskip0.2cm
\noindent
\textbf{Theorem B} (see \cite{Q18}).
{\it Let $f$ be a nonconstant meromorphic map of $\mathbf{C}^m$ into $\P^n(\mathbf{C})$ with a reduced representation ${\bf f}=(f_0,\ldots,f_n)$. Let $\{Q_i\}_{i=1}^q$ be a family of slowly (with respect to $f$) moving hypersurfaces in weakly $l-$subgeneral position with $\deg Q_i = d_i\ (1\le i\le q).$ Assume that $f$ is algebraically nondegenerate over $\mathcal K_{\{Q_i\}_{i=1}^q}$.  Then for any $\epsilon >0$, 
$$  \|\ (q-(l-n+1)(n+1)-\epsilon)T_f(r)\le \sum_{i=1}^{q}\dfrac{1}{d_i}N^{[L_j]}_{Q_i({\bf f})}(r)+o(T_f(r)),$$
where $L_j=\frac{1}{d_j}L_0$ and $L_0$ is explicitly estimated.}

Our purpose in this paper is to generalize all these above mentioned results to the case of moving hypersurfaces into a projective subvariety $V$ and an arbitray family of hypersurfaces $\{Q_i\}_{i=1}^q$. In order to do so, in Section 3, we introduced the notion of ``distributive constant'' $\Delta_V$ (resp. $\Delta_f$) of a family of moving hypersurfaces with respect to $V$ (resp. with respect to a meromorphic mapping $f$), and the notion of ``algebraic dimension $n_f$'' and ``algebraic degree $\delta_f$'' over the field $\mathcal K_{\{Q_i\}_{i=1}^q}$ of the mapping $f$.
Our main result is stated as follows.

\begin{theorem}\label{1.1}  
Let $f$ be a nonconstant meromorphic map of $\mathbf{C}^m$ into $\P^N(\mathbf{C})$ with a reduced representation ${\bf f}=(f_0,\ldots,f_N)$. Let $\{Q_i\}_{i=1}^q$ be a family of slowly (with respect to $f$) moving hypersurfaces in $\P^N(\C)$ with $\deg Q_i = d_i\ (1\le i\le q).$ Let $n_f$ be the algebraic dimension of $f$ over $\mathcal K_{\{Q_i\}_{i=1}^q}$ and let $\Delta_f$ be the distributive constant of $\{Q_i\}_{i=1}^q$ with respect to $f$. Suppose that $f$ has finite algebraic degree over $\mathcal K_{\{Q_i\}_{i=1}^q}$.  Then for any $\epsilon >0$, 
$$  \|\ (q-\Delta_f(n_f+1)-\epsilon)T_f(r)\le \sum_{i=1}^{q}\dfrac{1}{d_i}N_{Q_i({\bf f})}(r).$$
\end{theorem}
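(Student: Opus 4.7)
The plan is to combine Quang's replacing-hypersurface method from \cite{Q18,Q19} with the moving-filtration construction of Dethloff--Tan \cite{DT2}, arranged so that the distributive constant $\Delta_f$ and the algebraic dimension $n_f$ appear naturally in the coefficient. As a preliminary reduction I would replace each $Q_i$ by $Q_i^{d/d_i}$ where $d=\mathrm{lcm}(d_1,\ldots,d_q)$, which rescales each $N_{Q_i({\bf f})}(r)/d_i$ correctly and reduces to the equal-degree case. I would then work on the Zariski closure $W\subset\P^N(\C)$ of ${\bf f}$ over the field $\mathcal K_{\{Q_i\}_{i=1}^q}$; by definition $\dim W=n_f$, and the finite algebraic degree of $f$ guarantees that $W$ is cut out by finitely many moving hypersurfaces of bounded degree, so that the Hilbert-polynomial estimates on $W$ depend only on $n_f$ and $\delta_f$.

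Next, at each point $z$ in a generic subset of $\C^m$ I would order the indices $\sigma_z$ so that $|Q_{\sigma_z(1)}({\bf f})(z)|/\|{\bf f}(z)\|^d\le\cdots\le|Q_{\sigma_z(q)}({\bf f})(z)|/\|{\bf f}(z)\|^d$, and using the definition of $\Delta_f$ extract indices $j_0(z),\ldots,j_{n_f}(z)$ such that the hypersurfaces $Q_{j_k(z)}$ meet $W$ trivially at $z$ and their cumulative proximity dominates the full sum up to the factor $\Delta_f$. Following the combinatorial idea of \cite{Q18,Q19}, I would construct $n_f+1$ replacement moving hypersurfaces $\tilde P_0,\ldots,\tilde P_{n_f}$ as $\mathcal K_{\{Q_i\}_{i=1}^q}$-linear combinations of the original $Q_i$ yielding the pointwise estimate
\[
\sum_{k=0}^{n_f}\log\frac{\|{\bf f}(z)\|^d}{|\tilde P_k({\bf f})(z)|}\ \ge\ \frac{1}{\Delta_f}\sum_{i=1}^q\log\frac{\|{\bf f}(z)\|^d}{|Q_i({\bf f})(z)|}-O(1)
\]
off a thin set, while keeping $\tilde P_0,\ldots,\tilde P_{n_f}$ in general position on $W$. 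The factor $1/\Delta_f$ is exactly the payoff of the distributive constant.

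Having reduced to $n_f+1$ moving hypersurfaces in general position on $W$, I would invoke the filtration of $\bigoplus_t H^0(W,\mathcal O(t))$ by powers of the ideals generated by the $\tilde P_k$ as in \cite{DT2}. For $t$ sufficiently large, a suitable basis of $H^0(W,\mathcal O(t))$ admits a controllable distribution of vanishing orders along the $\tilde P_k$, governed by the Hilbert polynomial of $W$. Composing this basis with ${\bf f}$ gives an auxiliary meromorphic map $F\colon\C^m\to\P^{M-1}(\C)$ that is linearly nondegenerate over $\mathcal K_{\{Q_i\}_{i=1}^q}$, because ${\bf f}$ is algebraically nondegenerate on $W$ over that field by the very construction of $W$. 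Cartan's second main theorem for moving hyperplanes applied to $F$, together with the slow-growth hypothesis $\|\,T_{a_{iI}}(r)=o(T_f(r))$, produces the factor $n_f+1$; unwinding the filtration, letting $t\to\infty$, and combining with the pointwise estimate above then yields the claimed inequality with coefficient $\Delta_f(n_f+1)$.

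The main obstacle will be the construction of the $\tilde P_k$: their coefficients must lie globally in $\mathcal K_{\{Q_i\}_{i=1}^q}$, yet the selection of which $n_f+1$ originals to combine depends on $z$ through the ordering $\sigma_z$. The remedy is to partition $\C^m$ into finitely many pieces according to $\sigma_z$ and to define the $\tilde P_k$ separately on each piece via Cramer-type determinants in the coefficients of the $Q_{j_k(z)}$, so that the resulting moving hypersurfaces are defined over $\mathcal K_{\{Q_i\}_{i=1}^q}$ on each piece and the integral inequality survives a finite partition. A secondary subtlety is to verify that general position on $W$ of the $\tilde P_k$ can be forced by a generic choice of the constant coefficients in the construction without damaging the pointwise estimate, and that the resulting $F$ remains linearly nondegenerate over $\mathcal K_{\{Q_i\}_{i=1}^q}$, which is where the finite-algebraic-degree hypothesis plays its quantitative role.
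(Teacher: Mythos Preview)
Your plan is broadly workable but follows a genuinely different route from the paper. The paper does \emph{not} use the Dethloff--Tan filtration of $\bigoplus_t H^0(W,\mathcal O(t))$; it explicitly avoids that machinery and instead adapts the Chow-weight/Hilbert-weight method of Evertse--Ferretti and Ru. Concretely, after the replacing-hypersurface step the paper builds, for \emph{every} ordering $I_i$ of $\{1,\ldots,q\}$ (not just the one determined by $z$), an $(n_f{+}1)$-tuple $P_{i,0},\ldots,P_{i,n_f}$ with \emph{constant} scalar coefficients (via Lemma~3.2 at a single fixed generic point $z_0$), and then embeds $V_z$ into $\P^{l-1}(\C)$ by the full list of all $n_0(n_f{+}1)$ polynomials $P_{i,j}$. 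The Hilbert weight $S_{Y_z}(u,\mathbf c_z)$ of the image $Y_z$ is compared to the Chow weight $e_{Y_z}(\mathbf c_z)$ via Theorem~2.3 and Lemma~2.4, which is where the finite algebraic degree enters: it gives the uniform bound $\deg Y_z\le d^{n_f}\delta_f$ needed to control the error term $(2n_f{+}1)\delta_z/u$. A delicate claim (Claim~4.3) reconciles a $\mathcal K$-basis of $\mathcal K[y]_u/I(Y)_u$ with $\C$-bases of $\C[y]_u/(I_{Y_z})_u$ at generic $z$, after which a single linearly nondegenerate auxiliary map $F$ and Theorem~4.2 finish the job.

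By contrast, your proposal localizes in $z$ via the partition by orderings and then runs the Dethloff--Tan filtration on each piece. The paper itself notes (in a closing remark) that this filtration route also succeeds---and in fact removes the finite-algebraic-degree hypothesis altogether---so your instinct that ``the finite-algebraic-degree hypothesis plays its quantitative role'' in the filtration step is off: that hypothesis is specific to the Chow-weight bound and is not needed for the filtration argument. The trade-off is that the Chow-weight route yields an explicit truncation level for the counting functions, while the filtration route does not. Also, your proposed Cramer-type construction over $\mathcal K$ for the $\tilde P_k$ is unnecessary: taking constant-coefficient combinations chosen at one generic $z_0$ (as in Lemma~3.2) already gives moving hypersurfaces in general position on $V_z$ generically, and handling all orderings at once via a single embedding neatly sidesteps the partition bookkeeping you flag as the main obstacle.
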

Hence, if $\Delta_f=1$, i.e., $\{Q_i\}_{i=1}^q$ is in weakly general position with respect to $f$, we will get the sharp second main theorem
$$  \|\ (q-(n_f+1)-\epsilon)T_f(r)\le \sum_{i=1}^{q}\dfrac{1}{d_i}N_{Q_i({\bf f})}(r).$$
If we assume further that $f$ comes into a subvariety $V$ of $\P^N(\C)$ of dimension $n$, and $f$ is algebraically nondegenerate over $\mathcal K$, then $n_f=\dim V$, $\Delta_f=\Delta_V$, the algebraic degree of $f$ over $\mathcal K$ is equal to the degree of $V$. Hence, we have the following corrolary.
\begin{corollary}\label{1.2}
Let $f$ be a nonconstant meromorphic map of $\mathbf{C}^m$ into a subvariety $V$ of $\P^N(\mathbf{C})$ with a reduced representation ${\bf f}=(f_0,\ldots,f_N)$. Let $\{Q_i\}_{i=1}^q$ be a family of slowly (with respect to $f$) moving hypersurfaces with $\deg Q_j = d_j\ (1\le i\le q).$ Assume that $f$ is algebraically nondegenerate over $\mathcal K_{\{Q_i\}_{i=1}^q}$.  Let $\Delta_V$ be the distributive constant of $\{Q_i\}_{i=1}^q$ with respect to $V$. Then, for every $\epsilon>0$,
$$  \|\ (q-\Delta_V(\dim V+1)-\epsilon)T_f(r)\le \sum_{i=1}^{q}\dfrac{1}{d_i}N_{Q_i({\bf f})}(r).$$
\end{corollary}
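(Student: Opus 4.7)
The plan is to derive Corollary~\ref{1.2} as an immediate specialization of Theorem~\ref{1.1}; the core task is to match the three quantities $n_f$, $\Delta_f$, and the algebraic degree of $f$ appearing on the right-hand side of Theorem~\ref{1.1} with the corresponding geometric invariants of the subvariety $V$. Concretely, under the hypotheses $f(\C^m)\subset V$ and algebraic nondegeneracy of $f$ over $\mathcal K_{\{Q_i\}_{i=1}^q}$, I would establish, in order: (a) $n_f=\dim V$; (b) the algebraic degree of $f$ over $\mathcal K_{\{Q_i\}_{i=1}^q}$ equals $\deg V$, in particular is finite; (c) $\Delta_f=\Delta_V$.

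Steps (a) and (b) should go hand in hand. Unpacking the Section~3 definitions, $n_f$ is the dimension of the minimal algebraic subvariety $W\subset\P^N(\C)$ defined over $\mathcal K_{\{Q_i\}_{i=1}^q}$ whose $\C$-specialization contains the image of $f$, and the algebraic degree of $f$ is $\deg W$. The containment $f(\C^m)\subset V$ forces $W\subset V_{\mathcal K}$, the base change of $V$ to $\mathcal K$, while algebraic nondegeneracy of $f$ over $\mathcal K_{\{Q_i\}_{i=1}^q}$ prevents $W$ from being a proper $\mathcal K$-subvariety of $V$. Hence $W=V_{\mathcal K}$, which yields both (a) and (b) at once and supplies the finiteness of algebraic degree required in the hypothesis of Theorem~\ref{1.1}.

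Step (c) is where a genuine argument is required, and I expect it to be the main obstacle. After (a), the ``algebraic image variety'' of $f$ is exactly $V_{\mathcal K}$, so that $\Delta_f$ is, roughly, the supremum over subsets $J\subset\{1,\ldots,q\}$ of the ratio $|J|/\mathrm{codim}\bigl(V_{\mathcal K}\cap\bigcap_{i\in J}Q_i(z)^*\bigr)$ evaluated at a suitably generic $z\in\C^m$, while $\Delta_V$ is the analogous quantity with $V$ replacing $V_{\mathcal K}$. Using upper semicontinuity of fibre dimension together with the fact that $\mathcal K$-defined intersections specialize to their expected $\C$-dimension outside a proper analytic subset of $\C^m$, I would argue that the two codimensions coincide at a common generic point. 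Since there are only finitely many subsets $J$ to consider, taking the supremum then yields (c).

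Combining (a), (b) and (c), Theorem~\ref{1.1} immediately delivers
\begin{equation*}
\|\ (q-\Delta_V(\dim V+1)-\epsilon)T_f(r)\le\sum_{i=1}^{q}\dfrac{1}{d_i}N_{Q_i({\bf f})}(r),
\end{equation*}
which is the statement of Corollary~\ref{1.2}. The bulk of the effort is really in step (c): one must verify that the distributive constant is insensitive to whether it is computed against $V$ or against its $\mathcal K$-model $V_{\mathcal K}$, and this requires a careful choice of generic $z$ at which every one of the finitely many relevant subfamilies simultaneously attains its expected intersection dimension. The remaining steps are essentially bookkeeping once the Section~3 definitions are in hand.
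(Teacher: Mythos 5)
Your proposal is correct and follows essentially the same route as the paper: the paper also obtains Corollary~\ref{1.2} by specializing Theorem~\ref{1.1} after noting that, when $f$ maps into $V$ and is algebraically nondegenerate over $\mathcal K_{\{Q_i\}_{i=1}^q}$, one has $n_f=\dim V$, $\Delta_f=\Delta_V$, and the algebraic degree of $f$ equals $\deg V$ (the paper states these identities in the paragraph preceding the corollary without further elaboration). One small simplification worth noting: your step (c) is less delicate than you anticipate, because under algebraic nondegeneracy the ideal $I(f(\C^m))$ in $\mathcal K[x_0,\ldots,x_N]$ is exactly the extension of $I(V)$, so the specialized variety $\bigcap_i R_i(z)^*$ equals $V$ itself for generic $z$, and the two codimension expressions in Definitions~3.3 and 3.4 become literally identical rather than requiring a separate semicontinuity argument.
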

We see that, if $\Delta_V=1$ (i.e., $\{Q_i\}_{i=1}^q$ is in weakly general position with respect to $V$) we will get Theorem A. If $\{Q_i\}_{i=1}^q$ is in weakly $l$-subgeneral position with respect to $V$ then $\Delta_V\le (l-n+1)$, and hence we get an extension of Theorem B.

We would also like to emphasize that, our proof is different from the proofs of the previous authors, such as Dethloff-Tan \cite{DT1,DT2} and Yan-Yu \cite{YY}. In fact, we will develop the method making use of Chow weight of Evertse-Ferretti \cite{EF1} and also Ru \cite{Ru04,Ru09}. Our proof is simpler than that in \cite{DT1,DT2,YY}. Also, from our proof, it is impossible to give the truncation level for the counting functions.
     
\section{Notation and Auxialiary results}

\noindent
\textbf{(a) Some notation from Nevanlina theory.}

We use the following usual notations:
\begin{align*}
 \|z\|& := \big(|z_1|^2 + \dots + |z_m|^2\big)^{1/2} \text{ for }z = (z_1,\dots,z_m) \in \mathbb C^m,\\
\B^m(r) &:= \{ z \in \mathbb C^m : \|z\| < r\},\\
S(r) &:= \{ z \in \mathbb C^m : \|z\| = r\}\ (0<r<\infty),\\
v_{m-1}(z) &:= \big(dd^c \|z\|^2\big)^{m-1},\\
\sigma_m(z)&:= d^c\log\|z\|^2 \land \big(dd^c\log\|z\|^2\big)^{m-1} \text{on} \quad \mathbb C^m \setminus \{0\}.
\end{align*}
 For a divisor $\nu$ on  a $\mathbb C^m$ and a positive integer $M$ or $M= +\infty$, as usual we denote by $N^{[M]}(r,\nu)$ the counting function of $\nu$ with multiplicities truncated to level $M$.

For a meromorphic function $\varphi$ on $\C^m$, denote by $\nu_\varphi$ its divisor of zeros and set
$$N_{\varphi}(r)=N(r,\nu_{\varphi}), \ N_{\varphi}^{[M]}(r)=N^{[M]}(r,\nu_{\varphi})\ (r_0<r<R_0).$$

For brevity, we will omit the character $^{[M]}$ if $M=+\infty$.

Let $f : \mathbf{C}^m \longrightarrow \P^N(\mathbf{C})$ be a meromorphic mapping with a reduced representation
${\bf f}= (f_0,  \ldots , f_N)$. Set $\|{\bf f}\| = \big(|f_0|^2 + \dots + |f_N|^2\big)^{1/2}$.
The characteristic function of $f$ is defined by 
\begin{align*}
T_f(r)= \int\limits_{S(r)} \log\|{\bf f}\| \sigma_m -
\int\limits_{S(1)}\log\|{\bf f}\|\sigma_m.
\end{align*}

Let $\varphi$ be a nonzero meromorphic function on $\mathbf{C}^m$, which are occasionally regarded as a meromorphic map into $\P^1(\mathbf{C})$. The proximity function of $\varphi$ is defined by
$$m(r,\varphi):=\int_{S(r)}\log \max\ (|\varphi|,1)\sigma_m.$$
The Nevanlinna's characteristic function of $\varphi$ is defined as follows
$$ T(r,\varphi):=N_{\frac{1}{\varphi}}(r)+m(r,\varphi). $$
Then 
$$T_\varphi (r)=T(r,\varphi)+O(1).$$
The function $\varphi$ is said to be small (with respect to $f$) if $\|\ T_\varphi (r)=o(T_f(r))$.

Let $Q$ be a moving hypersurface in $\P^N(\mathbf{C})$ of degree $d\ge 1$ given by
$$ Q(z)({\bf x})=\sum_{I\in\mathcal T_d}a_I{\bf x}^I, $$
where $a_I\in\mathcal M$ for all $I\in\mathcal T_d$. 
The proximity function of $f$ with respect to $Q$, denoted by $m_f (r,Q)$, is defined by
$$m_f (r,Q)=\int_{S(r)}\log\dfrac{\|{\bf f}\| ^d}{|Q({\bf f})|}\sigma_m-\int_{S(r_0)}\log\dfrac{\|{\bf f}\| ^d}{|Q({\bf f})|}\sigma_m.$$
This definition is independent of the choice of the reduced representation of $f$. 

If $Q$ is a slowly moving hypersurface with respect to $f$, then  the first main theorem in Nevanlinna theory for meromorphic mappings and moving hypersurfaces is stated as follows:
$$dT_f (r)=m_f (r,Q) + N_{Q({\bf f})}(r)+o(T_f(r)).$$

\noindent

\begin{lemma}[{Lemma on logarithmic derivative, see \cite{NO}}]\label{2.1}
Let $f$ be a nonzero meromorphic function on $\mathbf{C}^m.$ Then 
$$\biggl|\biggl|\quad m\biggl(r,\dfrac{\mathcal{D}^\alpha (f)}{f}\biggl)=O(\log^+T(r,f))\ (\alpha\in \Z^m_+).$$
\end{lemma}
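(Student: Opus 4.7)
The plan is the standard two-step approach: reduce to the case $|\alpha|=1$ by an algebraic induction, and handle the first-order estimate by a Poisson-Jensen argument on balls in $\C^m$.

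For the reduction step, suppose $|\alpha|\ge 2$ and write $\alpha=\alpha'+e_j$ for some coordinate direction $e_j$; set $g:=\mathcal D^{\alpha'}f$. Then $\mathcal D^\alpha f=\partial g/\partial z_j$ and
$$\frac{\mathcal D^\alpha f}{f}=\frac{\partial g/\partial z_j}{g}\cdot\frac{g}{f},$$
so $\log^+|ab|\le\log^+|a|+\log^+|b|$ and integration over $S(r)$ give
$$m\Bigl(r,\frac{\mathcal D^\alpha f}{f}\Bigr)\le m\Bigl(r,\frac{\partial g/\partial z_j}{g}\Bigr)+m\Bigl(r,\frac{\mathcal D^{\alpha'}f}{f}\Bigr).$$
By the induction hypothesis the second summand is $O(\log^+T(r,f))$. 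For the first summand I would apply the first-order estimate to $g$; to convert $\log^+T(r,g)$ back to $\log^+T(r,f)$, I use the elementary bound $T(r,\partial h/\partial z_j)\le 2T(r,h)+m(r,\partial h/\partial z_j/h)$ (which follows from $N(r,\partial h/\partial z_j)\le 2N(r,h)$ and $m(r,\partial h/\partial z_j)\le m(r,h)+m(r,\partial h/\partial z_j/h)$) together with the first-order estimate itself, applied iteratively, to obtain $T(r,\mathcal D^{\alpha'}f)=O(T(r,f))$ outside a set of finite Lebesgue measure.

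For the first-order case, given a nonzero meromorphic $h$ on $\C^m$ and $j\in\{1,\ldots,m\}$, I would apply the Poisson-Jensen formula on the ball $\B^m(s)$ for $s>r$ to represent $\log|h(z)|$ inside $\B^m(s)$ as a smooth Poisson integral of its boundary values on $S(s)$ minus logarithmic contributions supported on the zero- and pole-divisors of $h$. Differentiating this representation in $z_j$ and dividing by $h$ expresses $\partial h/\partial z_j/h$ as a smooth boundary kernel integral plus singular logarithmic-distance terms. Taking $\log^+$, integrating against $\sigma_m$ on $S(r)$, and invoking Jensen's concavity inequality yield an estimate of the shape
$$m\Bigl(r,\frac{\partial h/\partial z_j}{h}\Bigr)\le C\Bigl(\log^+T(s,h)+\log^+\tfrac{1}{s-r}+\log^+r+1\Bigr).$$
Now a standard growth (Borel) lemma applied to the monotone function $r\mapsto T(r,h)$, with the choice $s=r+1/T(r,h)$, absorbs every term except $O(\log^+T(r,h))$ into a Borel set of finite Lebesgue measure; this is precisely the exceptional set hidden in the symbol $\|$.

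The genuine analytic obstacle lies entirely in this first-order Poisson-Jensen estimate in several complex variables, in particular the uniform control of the singular boundary and divisor integrals and the tracking of the dependence on the parameter $s-r$. These estimates are worked out in full generality in Noguchi–Ochiai \cite{NO}, so the cleanest path is to quote their first-order logarithmic-derivative lemma directly and to deduce the stated multi-index version by the purely algebraic induction described above, which adds no further analytic content.
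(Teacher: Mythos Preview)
The paper does not prove this lemma at all; it is stated with a citation to Noguchi--Ochiai \cite{NO} and used as a black box. Your sketch is a correct outline of the standard proof one finds in that reference: the algebraic induction reducing to $|\alpha|=1$, the Poisson--Jensen representation on $\B^m(s)$ yielding the intermediate bound in terms of $T(s,h)$ and $\log^+\frac{1}{s-r}$, and the Borel growth lemma to eliminate the $s$-dependence at the cost of an exceptional set of finite measure. Your own concluding recommendation---to quote the first-order lemma from \cite{NO} and run the induction---is exactly what the paper does (indeed the paper quotes the full multi-index statement directly), so there is nothing to compare beyond noting that you have supplied more detail than the author chose to include.
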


Repeating the argument in (Prop. 4.5 \cite{Fu}), we have the following.

\begin{proposition}[{see \cite[Prop. 4.5]{Fu}}]\label{2.2}
Let $\Phi_1,...,\Phi_k$ be meromorphic functions on $\mathbf{C}^m$ such that $\{\Phi_1,...,\Phi_k\}$ 
are  linearly independent over $\mathbf{C}.$ Then  there exists an admissible set, which is uniquely chosen in an explicitly way,  
$$\{\alpha_i=(\alpha_{i1},...,\alpha_{im})\}_{i=1}^k \subset \Z^m_+$$
with $|\alpha_i|=\sum_{j=1}^{m}|\alpha_{ij}|\le i-1 \ (1\le i \le k)$ such that the following are satisfied:

(i)\  $\{{\mathcal D}^{\alpha_i}\Phi_1,...,{\mathcal D}^{\alpha_i}\Phi_k\}_{i=1}^{k}$ is linearly independent over $\mathcal M,$\ i.e., \ $\det{({\mathcal D}^{\alpha_i}\Phi_j)}\not\equiv 0,$ 

(ii) $\det \bigl({\mathcal D}^{\alpha_i}(h\Phi_j)\bigl)=h^{k}\cdot \det \bigl({\mathcal D}^{\alpha_i}\Phi_j\bigl)$ for
any nonzero meromorphic function $h$ on $\mathbf{C}^m.$
\end{proposition}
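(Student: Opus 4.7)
The plan is to construct the admissible set canonically by a greedy algorithm with respect to a fixed total order $\prec$ on $\Z_+^m$ refining degree (say $\alpha \prec \beta$ iff $|\alpha|<|\beta|$, or $|\alpha|=|\beta|$ and $\alpha$ precedes $\beta$ lexicographically). Regarding $\Phi:=(\Phi_1,\ldots,\Phi_k)$ as a vector in $\mathcal M^k$, set $\alpha_1:=0$ and, inductively, let $\alpha_i$ be the $\prec$-smallest multi-index such that $\mathcal D^{\alpha_i}\Phi$ is not an $\mathcal M$-linear combination of $\mathcal D^{\alpha_1}\Phi,\ldots,\mathcal D^{\alpha_{i-1}}\Phi$. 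The choice is explicit and unique, and by construction every $\mathcal D^{\alpha}\Phi$ lies in the $\mathcal M$-span of $\{\mathcal D^{\alpha_j}\Phi : \alpha_j \preceq \alpha\}$, since either $\alpha$ itself was selected or it was passed over as already dependent on previously selected indices.

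The main obstacle is producing the degree bound $|\alpha_i|\le i-1$. Let $V(d):=\mathcal M\text{-span}\{\mathcal D^{\alpha}\Phi : |\alpha|\le d\}\subset\mathcal M^k$. The first key step is the lemma $\bigcup_d V(d)=\mathcal M^k$: otherwise some nonzero $(c_1,\ldots,c_k)\in\mathcal M^k$ would satisfy $\sum_j c_j\mathcal D^{\alpha}\Phi_j\equiv 0$ for every $\alpha$; choosing such a relation with minimally many nonzero coordinates and, after normalization, $c_1\equiv 1$, the derivation $\mathcal D_{x_l}$ of the relation combined (via the Leibniz rule) with the relation at $\alpha+e_l$ produces a strictly shorter relation in which $c_j$ is replaced by $\mathcal D_{x_l}c_j$; by minimality this new relation must be zero, forcing each $c_j\in\C$ and contradicting the $\C$-linear independence of the $\Phi_j$. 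The second key step is the sublemma: if $V(d)=V(d-1)$, then $V(d')=V(d)$ for every $d'\ge d$ (write any $\alpha$ of degree $d+1$ as $\alpha'+e_l$ with $|\alpha'|=d$, expand $\mathcal D^{\alpha}\Phi=\mathcal D_{x_l}\mathcal D^{\alpha'}\Phi$ using the relation for $\mathcal D^{\alpha'}\Phi\in V(d-1)$, and check everything stays in $V(d-1)$). Combined with the first lemma, the sublemma forces $\dim V(d)$ to strictly increase by at least one at each step until it reaches $k$; hence $\dim V(i-1)\ge i$, and since the greedy procedure produces exactly $\dim V(d)$ multi-indices of degree $\le d$, we conclude $|\alpha_i|\le i-1$. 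Assertion (i) is then automatic, for the vectors $\mathcal D^{\alpha_i}\Phi$ form an $\mathcal M$-basis of $\mathcal M^k$, so $\det(\mathcal D^{\alpha_i}\Phi_j)\not\equiv 0$.

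For (ii), the plan is to expand by Leibniz,
$$\mathcal D^{\alpha_i}(h\Phi_j)\;=\;\sum_{0\le\beta_i\le\alpha_i}\binom{\alpha_i}{\beta_i}\mathcal D^{\beta_i}h\cdot\mathcal D^{\alpha_i-\beta_i}\Phi_j,$$
and apply multilinearity of $\det$ in the rows to obtain
$$\det\bigl(\mathcal D^{\alpha_i}(h\Phi_j)\bigr)\;=\;\sum_{(\beta_1,\ldots,\beta_k)}\left(\prod_{i=1}^{k}\binom{\alpha_i}{\beta_i}\mathcal D^{\beta_i}h\right)\det\bigl(\mathcal D^{\alpha_i-\beta_i}\Phi_j\bigr).$$
The diagonal term $\beta_1=\cdots=\beta_k=0$ contributes exactly $h^k\det(\mathcal D^{\alpha_i}\Phi_j)$; it remains to show every other term vanishes. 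Given a nonzero tuple $(\beta_i)$, set $I:=\{i:\beta_i\ne 0\}$ and $i_0:=\min I$. For every $j<i_0$ the $j$-th row of the inner matrix equals $\mathcal D^{\alpha_j}\Phi$, while the $i_0$-th row is $\mathcal D^{\alpha_{i_0}-\beta_{i_0}}\Phi$ with $\alpha_{i_0}-\beta_{i_0}\prec\alpha_{i_0}$ (its degree strictly drops). By the defining property of the admissible set, $\mathcal D^{\alpha_{i_0}-\beta_{i_0}}\Phi$ lies in the $\mathcal M$-span of $\{\mathcal D^{\alpha_j}\Phi : j<i_0\}$, so the first $i_0$ rows are $\mathcal M$-linearly dependent and the corresponding determinant vanishes. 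This establishes (ii).
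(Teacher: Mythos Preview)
The paper does not prove this proposition; it is quoted from Fujimoto \cite[Prop.~4.5]{Fu} and stated without argument. Your proposal supplies a correct, self-contained proof, and its architecture is the classical one going back to Fujimoto: a greedy selection of multi-indices in a graded order, the degree bound $|\alpha_i|\le i-1$ obtained from the strictly increasing chain $V(0)\subsetneq V(1)\subsetneq\cdots$ (via the stabilization sublemma and the minimal-support argument showing $\bigcup_d V(d)=\mathcal M^k$), and the homogeneity identity (ii) deduced from Leibniz together with the key property that any skipped index produces a row already in the $\mathcal M$-span of earlier selected rows.

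Two cosmetic remarks. First, in the minimal-support argument you normalize ``$c_1\equiv 1$''; strictly one should normalize the \emph{first nonzero} coefficient to $1$, but the logic is unchanged. Second, your justification of the span property (``either $\alpha$ was selected or it was passed over'') is correct but terse: the point is that if $\alpha_{i-1}\prec\alpha\prec\alpha_i$ and $\alpha$ is unselected, then by the very definition of $\alpha_i$ as the $\prec$-least new independent index, $\mathcal D^{\alpha}\Phi$ already lies in $\mathrm{span}_{\mathcal M}\{\mathcal D^{\alpha_1}\Phi,\ldots,\mathcal D^{\alpha_{i-1}}\Phi\}$, which is exactly $\mathrm{span}_{\mathcal M}\{\mathcal D^{\alpha_j}\Phi:\alpha_j\preceq\alpha\}$. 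With that spelled out, the vanishing of every off-diagonal term in the Leibniz expansion of $\det(\mathcal D^{\alpha_i}(h\Phi_j))$ follows cleanly, and (ii) is established.
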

The determinant $\det \bigl({\mathcal D}^{\alpha_i}\Phi_j\bigl)$ is usually called the general Wronskian of $\{\Phi_1,...,\Phi_k\}$.

\vskip0.2cm
\noindent
{\bf (b) Chow weights and Hilbert weights.} 

We recall the notion of Chow weights and Hilbert weights from \cite{Ru09}.

Let $X\subset\P^N(\C)$ be a projective variety of dimension $n$ and degree $\delta$. The Chow form of $X$ is the unique polynomial, up to a constant scalar, 
$$F_X(\textbf{u}_0,\ldots,\textbf{u}_n) = F_X(u_{00},\ldots,u_{0N};\ldots; u_{n0},\ldots,u_{nN})$$
in $N+1$ blocks of variables $\textbf{u}_i=(u_{i0},\ldots,u_{iN}), i = 0,\ldots,n$ with the following
properties: 
\begin{itemize}
\item $F_X$ is irreducible in $k[u_{00},\ldots,u_{nN}]$;
\item $F_X$ is homogeneous of degree $\delta$ in each block $\textbf{u}_i, i=0,\ldots,n$;
\item $F_X(\textbf{u}_0,\ldots,\textbf{u}_n) = 0$ if and only if $X\cap H_{\textbf{u}_0}\cap\cdots\cap H_{\textbf{u}_n}\ne\varnothing$, where $H_{\textbf{u}_i}, i = 0,\ldots,n$, are the hyperplanes given by
$$u_{i0}x_0+\cdots+ u_{iN}x_N=0.$$
\end{itemize}
Let ${\bf c}=(c_0,\ldots, c_N)$ be a tuple of real numbers and $t$ be an auxiliary variable. We consider the decomposition
\begin{align*}
F_X(t^{c_0}u_{00},&\ldots,t^{c_N}u_{0N};\ldots ; t^{c_0}u_{n0},\ldots,t^{c_N}u_{nN})\\ 
& = t^{e_0}G_0(\textbf{u}_0,\ldots,\textbf{u}_N)+\cdots +t^{e_r}G_r(\textbf{u}_0,\ldots, \textbf{u}_N),
\end{align*}
with $G_0,\ldots,G_r\in \C[u_{00},\ldots,u_{0N};\ldots; u_{n0},\ldots,u_{nN}]$ and $e_0>e_1>\cdots>e_r$. The Chow weight of $X$ with respect to ${\bf c}$ is defined by
\begin{align*}
e_X({\bf c}):=e_0.
\end{align*}
For each subset $J = \{j_0,\ldots,j_n\}$ of $\{0,\ldots,N\}$ with $j_0<j_1<\cdots<j_n,$ we define the bracket
\begin{align*}
[J] = [J]({\bf u}_0,\ldots,{\bf u}_n):= \det (u_{ij_t}), i,t=0,\ldots,n,
\end{align*}
where $\textbf{u}_i = (u_{i0},\ldots,u_{iN})\ (1\le i\le n)$ denote the blocks of $N+1$ variables. Let $J_1,\ldots,J_\beta$ with $\beta=\binom{N+1}{n+1}$ be all subsets of $\{0,\ldots,N\}$ of cardinality $n+1$.

Then $F_X$ can be written as a homogeneous polynomial of degree $\delta$ in $[J_1],\ldots,[J_\beta]$. We may see that for $\textbf{c}=(c_0,\ldots,c_N)\in\R^{N+1}$ and for any $J$ among $J_1,\ldots,J_\beta$,
\begin{align*}
\begin{split}
[J](t^{c_0}u_{00},\ldots,t^{c_N}u_{0N},&\ldots,t^{c_0}u_{n0},\ldots,t^{c_N}u_{nN})\\
&=t\sum_{j\in J}c_j[J](u_{00},\ldots,u_{0N},\ldots,u_{n0},\ldots,u_{nN}).
\end{split}
\end{align*}

For $\textbf{a} = (a_0,\ldots,a_N)\in\mathbb Z^{N+1}$ we write ${\bf x}^{\bf a}$ for the monomial $x^{a_0}_0\cdots x^{a_N}_N$. Denote by $\C[x_0,\ldots,x_N]_u$ the vector space of homogeneous polynomials in $\C[x_0,\ldots,x_N]$ of degree $u$ (including $0$). For an ideal $I$ in $\C[x_0,\ldots,x_N]$, we put $I_u :=\C[x_0,\ldots,x_N]_u\cap I$. Let $I(X)$ be the prime ideal in $\C[x_0,\ldots,x_N]$ defining $X$.  The Hilbert function $H_X$ of $X$ is defined by, for $u = 1, 2,\ldots,$
\begin{align*}
H_X(u):=\dim (\C[x_0,\ldots,x_N]_u/I(X)_u).
\end{align*}
By the usual theory of Hilbert polynomials,
\begin{align*}
H_X(u)=\delta\cdot\frac{u^N}{N!}+O(u^{N-1}).
\end{align*}
The $u$-th Hilbert weight $S_X(u,{\bf c})$ of $X$ with respect to the tuple ${\bf c}=(c_0,\ldots,c_N)\in\mathbb R^{N+1}$ is defined by
\begin{align*}
S_X(u,{\bf c}):=\max\left (\sum_{i=1}^{H_X(u)}{\bf a}_i\cdot{\bf c}\right),
\end{align*}
where the maximum is taken over all sets of monomials ${\bf x}^{{\bf a}_1},\ldots,{\bf x}^{{\bf a}_{H_X(u)}}$ whose residue classes modulo $I$ form a basis of $k[x_0,\ldots,x_N]_u/I_u.$

The following theorems are due to J. Evertse and R. Ferretti.
\begin{theorem}[{see  \cite[Theorem 4.1]{EF1}}]\label{2.3}
Let $X\subset\P^N(\C)$ be an algebraic variety of dimension $n$ and degree $\delta$. Let $u>\delta$ be an integer and let ${\bf c}=(c_0,\ldots,c_N)\in\mathbb R^{N+1}_{\geqslant 0}$.
Then
$$ \frac{1}{uH_X(u)}S_X(u,{\bf c})\ge\frac{1}{(n+1)\delta}e_X({\bf c})-\frac{(2n+1)\delta}{u}\cdot\left (\max_{i=0,\ldots,N}c_i\right). $$
\end{theorem}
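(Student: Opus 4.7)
The plan is to follow Evertse and Ferretti, comparing the Hilbert weight with the Chow weight via the weighted filtration on $\C[x_0,\ldots,x_N]_u/I(X)_u$ induced by the one-parameter subgroup $t\mapsto \mathrm{diag}(t^{c_0},\ldots,t^{c_N})$, and proceeding by induction on $n = \dim X$.

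First I would reduce to rational ${\bf c}$ by continuity of both $S_X(u,{\bf c})$ and $e_X({\bf c})$ in ${\bf c}$, and then, since both sides are homogeneous of degree $1$ in ${\bf c}$, clear denominators to assume ${\bf c} \in \Z^{N+1}_{\ge 0}$. Interpret $S_X(u,{\bf c})$ in filtration form: for the increasing filtration $F_t$ of $\C[x_0,\ldots,x_N]_u/I(X)_u$ spanned by residue classes of monomials ${\bf x}^{\bf a}$ with $\sum c_i a_i \le t$, a monomial basis compatible with $F_\bullet$ realizes the maximum in the definition of $S_X(u,{\bf c})$, and
$$S_X(u,{\bf c}) = \sum_{t\ge 0} t\,(\dim F_t - \dim F_{t-1}).$$

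The central estimate is the asymptotic lower bound
$$S_X(u,{\bf c}) \ge \frac{u^{n+1}}{(n+1)!}\, e_X({\bf c}) - C_n\,\delta\, u^n\, \max_i c_i,$$
which I would prove by induction on $n$. The base case $n=0$ reduces to a direct calculation on a zero-cycle of degree $\delta$. For the inductive step, after a generic change of coordinates, pick a coordinate hyperplane $H$ whose intersection $Y = X\cap H$ has dimension $n-1$ and degree $\delta$; the Chow form of $Y$ is a specialization of $F_X$, giving a recursive relation between $e_X({\bf c})$ and $e_Y({\bf c})$ with boundary terms controlled by $\max_i c_i$, and the graded exact sequence
$$ 0 \to (\C[{\bf x}]/I(X))_{u-1} \xrightarrow{\cdot\, \ell_H} (\C[{\bf x}]/I(X))_u \to (\C[{\bf x}]/I(Y))_u \to 0 $$
gives a corresponding telescoping formula for $S_X(u,{\bf c})$ in terms of $S_Y(u,{\bf c})$. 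Iterating and collecting errors yields the bound with $C_n = O(n)$.

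Combining with the Hilbert polynomial expansion $H_X(u) = \delta u^n/n! + O(u^{n-1})$, and using the hypothesis $u > \delta$ to absorb the lower-order correction into the error term, one divides by $u H_X(u)$ to extract the main term $\frac{e_X({\bf c})}{(n+1)\delta}$ together with the stated error $\frac{(2n+1)\delta}{u}\max_i c_i$. The main obstacle is tracking constants through the induction: a naive telescoping picks up multiplicative factors, and obtaining the sharp linear coefficient $(2n+1)\delta$ requires both choosing the distinguished coordinate hyperplane to realize $\max_i c_i$ and using the multilinearity of $F_X$ in its $n+1$ blocks ${\bf u}_0,\ldots,{\bf u}_n$ to distribute the error symmetrically across the induction.
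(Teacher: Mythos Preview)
The paper does not prove this theorem at all: it is quoted verbatim from Evertse and Ferretti \cite[Theorem~4.1]{EF1} and used as a black box in the proof of Theorem~\ref{1.1}. There is therefore no ``paper's own proof'' to compare against; the only relevant question is whether your sketch is a plausible outline of the Evertse--Ferretti argument.

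At the level of strategy your outline is broadly in the right spirit --- the comparison of $S_X(u,{\bf c})$ and $e_X({\bf c})$ does ultimately rest on the weighted filtration and on relating the Chow form to Hilbert-type data --- but the actual proof in \cite{EF1} is organized rather differently. Evertse and Ferretti do not run a bare induction on $\dim X$ via a generic hyperplane section and a telescoping exact sequence as you describe. Instead they work directly with Mumford's description of the Chow form and its weight under a one-parameter subgroup, reducing the problem to a combinatorial estimate on lattice-point counts (sums of weights of monomial bases) that they control uniformly. Your proposed exact sequence
$$0 \to (\C[{\bf x}]/I(X))_{u-1} \xrightarrow{\cdot\,\ell_H} (\C[{\bf x}]/I(X))_u \to (\C[{\bf x}]/I(Y))_u \to 0$$
requires $\ell_H$ to be a non-zerodivisor modulo $I(X)$, which is fine generically, but the recursion you sketch between $e_X({\bf c})$ and $e_Y({\bf c})$ is not simply a ``specialization of $F_X$'': the Chow form of $X\cap H$ is a \emph{resultant} of $F_X$ against the linear form, not a substitution, and the bookkeeping of the extra block of variables is exactly where the constant $(2n+1)$ comes from in the original. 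As you yourself flag, tracking the constant through a naive induction is the delicate point, and your sketch does not indicate how the multiplicative blow-up is avoided. So the outline is reasonable as motivation, but it is not the Evertse--Ferretti proof and would need substantial additional work to yield the stated constant.
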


\begin{lemma}[{see \cite[Lemma 5.1]{EF2}, also \cite{Ru09}}]\label{2.4}
Let $Y\subset\P^N(\C)$ be an algebraic variety of dimension $n$ and degree $\delta$. Let ${\bf c}=(c_1,\ldots, c_q)$ be a tuple of positive reals. Let $\{i_0,\ldots,i_n\}$ be a subset of $\{1,\ldots,q\}$ such that
$$Y \cap \{y_{i_0}=\cdots =y_{i_n}=0\}=\varnothing.$$
Then
$$e_Y({\bf c})\ge (c_{i_0}+\cdots +c_{i_n})\delta.$$
\end{lemma}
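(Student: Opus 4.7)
The plan is to exhibit an explicit monomial in $F_Y$ whose weight with respect to $\mathbf{c}$ equals $\delta(c_{i_0}+\cdots+c_{i_n})$, and then appeal to the defining property of $e_Y(\mathbf{c})$ as the largest exponent of $t$ occurring after the weighted substitution. Let $J^*:=\{i_0,\ldots,i_n\}$ and specialize the blocks by setting $\mathbf{u}_k:=e_{i_k}$ (the standard basis vector of $\C^{N+1}$) for $k=0,\ldots,n$. With this choice the hyperplane $H_{\mathbf{u}_k}$ is precisely $\{y_{i_k}=0\}$, so the assumption $Y\cap\{y_{i_0}=\cdots=y_{i_n}=0\}=\varnothing$ translates, via the third bullet characterizing $F_Y$, into the single numerical identity
$$F_Y(e_{i_0},\ldots,e_{i_n})\ne 0.$$

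Next I would exploit multi-homogeneity. Since $F_Y$ is homogeneous of degree $\delta$ in each block $\mathbf{u}_k$, every monomial of $F_Y$ has the form $\prod_{k=0}^n \prod_{j}u_{kj}^{b_{kj}}$ with $\sum_j b_{kj}=\delta$ for each $k$. Substituting $\mathbf{u}_k=e_{i_k}$ sets $u_{kj}=0$ for $j\ne i_k$, so the only monomial that does not vanish is $M:=\prod_{k=0}^n u_{k,i_k}^{\delta}$. Therefore the coefficient of $M$ in the expansion of $F_Y$ equals $F_Y(e_{i_0},\ldots,e_{i_n})$, which is nonzero by the previous step.

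Finally, I apply the substitution $u_{kj}\mapsto t^{c_j}u_{kj}$ directly to the monomial $M$: it acquires the factor $t^{\sum_{k=0}^n\delta c_{i_k}}=t^{\delta(c_{i_0}+\cdots+c_{i_n})}$. Since the coefficient of $M$ in $F_Y$ is nonzero and no other monomial of $F_Y$ can produce the monomial $M$ after substitution (the substitution is injective on the $u_{kj}$-monomials up to the scalar factor $t^{\text{weight}}$), this power of $t$ appears with nonzero coefficient in the right-hand side
$$F_Y(t^{c_0}u_{00},\ldots,t^{c_N}u_{nN})=\sum_{s=0}^{r}t^{e_s}G_s(\mathbf{u}_0,\ldots,\mathbf{u}_n).$$
Because $e_Y(\mathbf{c})=e_0$ is the maximum of the $e_s$, we conclude $e_Y(\mathbf{c})\ge \delta(c_{i_0}+\cdots+c_{i_n})$, as required.

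The argument is almost entirely bookkeeping; there is no genuine obstacle, only the point to be careful about, namely that evaluating a multi-homogeneous polynomial at the tuple $(e_{i_0},\ldots,e_{i_n})$ extracts the coefficient of exactly one pure-power monomial. The nonvanishing of that coefficient then transfers rigidly through the weighted substitution to produce a surviving term of the claimed weight.
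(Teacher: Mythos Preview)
Your argument is correct. The paper does not supply its own proof of this lemma; it merely cites \cite[Lemma~5.1]{EF2} and \cite{Ru09}. Your approach---specializing the blocks $\mathbf{u}_k$ to the standard basis vectors $e_{i_k}$, observing that multi-homogeneity forces the unique surviving monomial to be $\prod_{k=0}^n u_{k,i_k}^{\delta}$ with nonzero coefficient $F_Y(e_{i_0},\ldots,e_{i_n})$, and then reading off its weight under the substitution $u_{kj}\mapsto t^{c_j}u_{kj}$---is exactly the standard proof found in the cited sources. The only delicate point, which you address correctly, is that distinct monomials in the $u_{kj}$ remain distinct after the weighted substitution, so no cancellation can kill the contribution of $M$.
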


\section{Distributive constant and lemma on replacing moving hypersurfaces}

Let $Q_1,\ldots,Q_q$ be $q$ moving hypersurfaces in $\P^N(\C)$ given by
$$ Q_i(z)({\bf x})=\sum_{I\in\mathcal T_{d_i}}a_{iI}(z){\bf x}^I,$$
where ${\bf x}=(x_0,\ldots,x_N)$, ${\bf x}^I=x_0^{i_0}\cdots x_N^{i_N}$ for $I=(i_0,\ldots,i_N)$. Denote simply by $\mathcal K$ the field $\mathcal K_{\{Q_i\}_{i=1}^q}$. 
Denote by $\mathcal C_{\mathcal K}$ the set of all non-negative functions $h : \mathbf{C}^m\setminus A\longrightarrow [0,+\infty]$, which are of the form
$$ h=\dfrac{|g_1|+\cdots +|g_l|}{|g_{l+1}|+\cdots +|g_{l+k}|}, $$
where $k,l\in\N,\ g_1,...., g_{l+k}\in\mathcal K\setminus\{0\}$ and $A\subset\mathbf{C}^m$, which may depend on
$g_1,....,g_{l+k}$, is an analytic subset of codimension at least two. Then, for $h\in\mathcal C_{\mathcal K}$ we have
$$\int\limits_{S(r)}\log h\sigma_m=O(\max T_{a_{iI}/a_{iJ}}(r)).$$
Then, we see that for every moving hypersurface $Q$ in $\mathcal K[x_0,\ldots,x_N]$ of degree $d$, we have
$$ Q(z)({\bf x})\le c(z)\|{\bf x}\|^d $$
for some $c\in\mathcal C_{\mathcal K}$.

Let $\{R_1,\ldots,R_p\}$ be a family of nonzero homogeneous polynomials in $\mathcal K[x_0,\ldots,x_N]$. For a point $z$, which is neither zero nor pole of any nonzero coefficients of $R_i\ (1\le i\le p)$, we set
$$ n_z=\dim\bigcap_{i=1}^pR_i(z)^*.$$
\begin{lemma}\label{new3.1}
$n_z$ are constant outside a proper analytic subset of $\C^m$.
\end{lemma}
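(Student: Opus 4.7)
The plan is to interpret $n_z$ as the fiber dimension of a proper holomorphic map and to invoke the standard semicontinuity theorem.

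First I would let $A\subset\C^m$ denote the union of the zero and pole loci of all (finitely many) nonzero coefficients of $R_1,\ldots,R_p$. Since every such coefficient lies in $\mathcal K\subset\mathcal M$, the set $A$ is a proper analytic subset of $\C^m$, and on its complement each $R_i(z)$ is a well-defined nonzero homogeneous polynomial of fixed degree $d_i$ whose coefficients depend holomorphically on $z$. I would then form the incidence variety
$$ X:=\{(z,[x])\in(\C^m\setminus A)\times\P^N(\C)\colon R_i(z)(x)=0\text{ for all }1\le i\le p\},$$
which is a closed analytic subset of $(\C^m\setminus A)\times\P^N(\C)$.

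Next, let $\pi\colon X\to\C^m\setminus A$ be the first projection. Because $\P^N(\C)$ is compact, $\pi$ is proper, and its fiber above $z$ equals $\{z\}\times\bigcap_{i=1}^p R_i(z)^*$, which has dimension precisely $n_z$ (with the convention that an empty fiber has dimension $-\infty$). By the classical semicontinuity of fiber dimension for proper holomorphic maps between complex spaces (Cartan--Remmert), the locus $\{z\in\C^m\setminus A\colon n_z\ge k\}$ is analytic in $\C^m\setminus A$ for every integer $k$.

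Finally, since $n_z$ takes only finitely many values in $\{-\infty,0,1,\ldots,N\}$, there is a smallest value $n_0$ attained by $n_z$ on $\C^m\setminus A$. Then $\{z\in\C^m\setminus A\colon n_z\ge n_0+1\}$ is an analytic subset that omits a nonempty open set, hence is a proper analytic subset; taking its union with $A$ yields a proper analytic subset $S\subset\C^m$ outside which $n_z\equiv n_0$. The only subtle point---and what I regard as the main obstacle---is invoking fiber-dimension semicontinuity in this meromorphic setting, but removing the zero/pole locus $A$ of the coefficients at the outset reduces the question to the standard proper-map theorem, after which the argument is routine.
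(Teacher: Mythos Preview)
Your proof is correct and takes a genuinely different route from the paper's. The paper argues elementarily: it picks a point $z_0$ realizing the minimum $n=\min_z n_z$, chooses $n+1$ hyperplanes $H_1,\dots,H_{n+1}$ in $\P^N(\C)$ with $\bigcap_i R_i(z_0)^*\cap\bigcap_j H_j^*=\varnothing$, and then observes that this emptiness persists for all $z$ outside a proper analytic subset (essentially because a resultant in the coefficients is nonzero at $z_0$, hence generically nonzero). That forces $n_z\le n$, hence $n_z=n$, generically. You instead package everything as upper semicontinuity of fiber dimension for the proper projection from the incidence variety $X\subset(\C^m\setminus A)\times\P^N(\C)$. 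Your argument is cleaner conceptually and would apply verbatim to any holomorphic family of projective subschemes; the paper's argument is more self-contained and avoids quoting the Cartan--Remmert machinery.

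One small caution: your final claim that ``taking its union with $A$ yields a proper analytic subset $S\subset\C^m$'' is not automatic, since an analytic subset of $\C^m\setminus A$ need not extend analytically across the hypersurface $A$. Here it does, but you should say why. The clean fix is to clear denominators first: write each meromorphic coefficient as a global quotient of holomorphic functions on $\C^m$ and replace $R_i$ by $\tilde R_i:=\psi_i R_i$ with holomorphic coefficients; then the incidence variety lives in all of $\C^m\times\P^N(\C)$, its projection is proper, and semicontinuity gives that $\{z\in\C^m:\dim\tilde\pi^{-1}(z)\ge n_0+1\}$ is analytic in $\C^m$. On $\C^m\setminus A$ this set coincides with your $S'$, so $S'\cup A$ is indeed analytic. (The paper glosses over the analogous point when it asserts the empty-intersection condition propagates; there the fix is a one-line resultant argument.)
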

\begin{proof} Take a point $z_0$ so that 
$$n:=n_{z_0}=\min_{z}\dim\bigcap_{i=1}^pR_i(z)^*,$$
where the minimum is taken over all $z$, which is neither zero nor pole of any nonzero coefficients of $R_i\ (1\le i\le q)$. Choose $H_1,\ldots,H_{n+1}$ be $n+1$ hyperplanes in $\P^N(\C)$ such that 
$$\bigcap_{i=1}^pR_i(z_0)^*\cap\bigcap_{i=1}^{n+1}H_i^*=\varnothing.$$
This implies that 
$$\bigcap_{i=1}^pR_i(z)^*\cap\bigcap_{i=1}^{n+1}H_i^*=\varnothing$$
for all $z$ outside a proper analytic subset of $\C^m$. But this assertion yields that
$$ \dim\bigcap_{i=1}^pR_i(z)^*\le n.$$
By the definition of $n$, we have
$$ \min_{z}\dim\bigcap_{i=1}^pR_i(z)^*=n$$
for all $z$ outside a proper analytic subset of $\C^m$. The lemma is proved.
\end{proof}

Let $f$ be a meromorphic mapping from $\C^m$ into $\P^N(\C)$ with a reduced representation ${\bf f}=(f_0,f_1,\ldots,f_N)$. Assume that $Q_i(z)({\bf f}(z))\not\equiv 0$ for all $1\le i\le q$. Denote by $I(f(\C^m))$ the ideal of all polynomials $Q$ in $\mathcal K[x_0,\ldots,x_N]$ (including zero polynomial) such that $Q({\bf f})\equiv 0$. By the coherent of the ring of $\mathcal K[x_0,\ldots,x_N]$, there exists a minimal generating set of $I(f(\C^m))$, denoted again by $\{R_1,\ldots,R_p\}$. Then there is a positive integer $n$ such that
$$ n=\dim\bigcap_{i=1}^pR_i(z)^*$$
for generic points $z\in\C^m$. Here, once we say an assertion holds for generic points, it means that assertion holds for every point in $\C^m$ outside a proper analytic subset.

Now, take another minimal generating set $R_1',\ldots,R_{p'}'$ of $I(f(\C^m))$. Let $n'$ be the integer such that
$$ n'=\dim\bigcap_{i=1}^{p'}R'_i(z)^*$$
for generic points $z$. We have
$$ R'_i=\sum_{j=1}^pa_{ij}R_i, $$
where $a_{ij}\in\mathcal K[x_0,\ldots,x_N]$. Therefore, for every point $z\in\C^m$, which is neither zero nor pole of any nonzero functions $a_{ij}$ and any nonzero coefficients of $R_j,R'_i$, we have
$$\bigcap_{i=1}^pR_i(z)^*\subset\bigcap_{i=1}^{p'}R'_i(z)^*.$$
Hence $n\le n'$. Similarly $n'\le n$, and hence $n=n'$.

By this reason, we may define the \textit{algebraic dimension} of $f$ over the field $\mathcal K$ by a constant $n_f$ such that
$$ n_f:=\dim\bigcap_{i=1}^pR_i(z)^*$$
for a minimal generating set $\{R_1,\ldots,R_p\}$ of $I(f(\C^m))$ and for generic points $z\in\C^m$.

Also, by the same arguments, we have
$$\bigcap_{i=1}^pR_i(z)^*=\bigcap_{i=1}^{p'}R'_i(z)^*$$
for generic points $z\in\C^m$. Then we may define the \textit{algebraic degree} of the mapping $f$ as follows:
$$ \delta_f=\min\left\{a\in\Z_+|a\ge\deg\bigcap_{i=1}^pR_i(z)^*\ \forall z\in\C^m\text{ outside a proper analytic subset}\right\}$$
(here, if there is no positive integer $a$ such that $a\ge\deg\bigcap_{i=1}^pR_i(z)^*$ for generic points $z$ then $\delta_f=+\infty$).
Hence if $f$ is considered as an algebraically nondegenerate (over $\mathcal K$) mapping into a subvariety $V$ of $\P^N(\C)$ then $\delta_f=\deg V$.

\begin{lemma}\label{new2} With the above notation, let $1\le j_1\le\cdots\le j_k\le q$. Suppose that there exists $z_0$, which is not pole of any coefficients of all $R_i\ (1\le i\le p),Q_{j_s}\ (1\le s\le k)$, such that $\bigcap_{i=1}^pR_i(z_0)^*\cap\bigcap_{s=1}^kQ_{j_s}(z_0)^*=\varnothing.$ Then we have
$$ \bigcap_{i=1}^pR_i(z)^*\cap\bigcap_{s=1}^kQ_{j_s}(z)^*=\varnothing$$
for generic points $z$, and there exists a function $c\in\mathcal C_{\mathcal K}$ such that
$$ \|{\bf f}(z)\|\le c(z)\max_{1\le s\le k}\{Q_{j_s}(z)({\bf f}(z))\}.$$
Moreover, 
$$ \bigcap_{i=1}^{p'}R'_i(z)^*\cap\bigcap_{s=1}^kQ_{j_s}(z)^*=\varnothing.$$
for generic points $z\in\C^m$.
\end{lemma}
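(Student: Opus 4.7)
The plan is to handle the three assertions in order. First, I would upgrade the single-point hypothesis to a generic one by a properness argument. Let $A\subset\C^m$ be the proper analytic subset where some coefficient of the $R_i$ or $Q_{j_s}$ has a pole, and set
$$W := \bigl\{(z,[x])\in(\C^m\setminus A)\times\P^N(\C) : R_i(z)(x)=0,\ Q_{j_s}(z)(x)=0\ \forall i,s\bigr\}.$$
The projection $\pi:W\to\C^m\setminus A$ is proper (as $\P^N(\C)$ is compact), so by Remmert's proper mapping theorem $\pi(W)$ is analytic. Since $z_0\notin\pi(W)$, the union $A\cup\pi(W)$ is a proper analytic subset of $\C^m$ outside which the intersection $\bigcap_i R_i(z)^*\cap\bigcap_s Q_{j_s}(z)^*$ is empty.

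Second, for the norm bound, I would combine Hilbert's Nullstellensatz with a specialization argument. Specializing at $z_0$, the polynomials $R_i(z_0),Q_{j_s}(z_0)\in\C[x_0,\ldots,x_N]$ have empty common projective zero set, so Hilbert's Nullstellensatz supplies an integer $M$ such that every monomial ${\bf x}^\alpha$ of degree $M$ lies in the ideal they generate in $\C[x_0,\ldots,x_N]$. Consider the $\mathcal K$-linear map
$$\Phi_M:\bigoplus_i \mathcal K[x_0,\ldots,x_N]_{M-\deg R_i}\oplus\bigoplus_s\mathcal K[x_0,\ldots,x_N]_{M-d_{j_s}}\to\mathcal K[x_0,\ldots,x_N]_M$$
defined by $(A_i,B_s)\mapsto\sum_i A_iR_i+\sum_s B_sQ_{j_s}$. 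The rank of such a map can only drop under specialization, so the surjectivity of $\Phi_M(z_0)$ forces $\Phi_M$ itself to be surjective. Hence for each multi-index $\alpha$ with $|\alpha|=M$ there exist $A_{i\alpha},B_{s\alpha}\in\mathcal K[x_0,\ldots,x_N]$ satisfying
$${\bf x}^\alpha=\sum_i A_{i\alpha}({\bf x})R_i({\bf x})+\sum_s B_{s\alpha}({\bf x})Q_{j_s}({\bf x}).$$
Substituting ${\bf f}(z)$ and using $R_i({\bf f})\equiv 0$ gives ${\bf f}(z)^\alpha=\sum_s B_{s\alpha}({\bf f}(z))Q_{j_s}({\bf f}(z))$. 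Because $B_{s\alpha}$ is homogeneous of degree $M-d_{j_s}$ with coefficients in $\mathcal K$, one has $|B_{s\alpha}({\bf f}(z))|\le c_{s\alpha}(z)\|{\bf f}(z)\|^{M-d_{j_s}}$ for some $c_{s\alpha}\in\mathcal C_\mathcal K$, and maximizing over $\alpha$ (together with absorbing the appropriate powers of $\|{\bf f}\|$) yields the asserted inequality.

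For the third assertion, since both $\{R_i\}$ and $\{R'_j\}$ are minimal generating sets of the same ideal $I(f(\C^m))$, each $R'_j$ is a $\mathcal K[x_0,\ldots,x_N]$-linear combination of the $R_i$ and conversely. Away from the (proper analytic) pole locus of the occurring coefficients this forces $\bigcap_i R_i(z)^*=\bigcap_j R'_j(z)^*$, so the first assertion applied to $\{R_i\}$ yields at once the analogous emptiness statement for $\{R'_j\}$. The principal difficulty throughout is the second assertion, where one must lift a pointwise Nullstellensatz certificate in $\C[x_0,\ldots,x_N]$ at $z_0$ to a genuine identity over the function field $\mathcal K$; the rank-drop specialization argument above is the cleanest way I know to close this gap.
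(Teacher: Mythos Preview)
Your proposal is correct and follows essentially the same route as the paper. The paper's proof is extremely terse: it declares the first assertion ``clear'', cites \cite[Lemma~2.2]{DT1} (and \cite[Lemma~2.3]{DT2}) for the norm bound, and reuses the inclusion $\bigcap_i R'_i(z)^*\subset\bigcap_i R_i(z)^*$ already established before the lemma for the third assertion. Your Remmert properness argument is the standard way to make the first assertion precise, and your Nullstellensatz-plus-rank-specialization argument is exactly the content of the cited Dethloff--Tan lemma (they, too, lift the degree-$M$ identity from a good specialization to $\mathcal K$); so you are in effect reproducing the proofs of the external references rather than taking a different path. The only cosmetic difference is that for the third assertion the paper is content with the one-sided inclusion $\bigcap_j R'_j(z)^*\cap\bigcap_s Q_{j_s}(z)^*\subset\bigcap_i R_i(z)^*\cap\bigcap_s Q_{j_s}(z)^*=\varnothing$, while you invoke the full equality $\bigcap_i R_i(z)^*=\bigcap_j R'_j(z)^*$; both are already justified in the paragraphs preceding the lemma.
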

\begin{proof} We use the same arguments as in \cite[Lemma 2.3]{DT2}. It is clear that
$$ \bigcap_{i=1}^pR_i(z)^*\cap\bigcap_{s=1}^kQ_{j_s}(z)^*=\varnothing$$
for generic points $z$. From \cite[Lemma 2.2]{DT1}, there exists a function $c\in\mathcal C_{\mathcal K}$ such that
\begin{align*}
\|{\bf f}(z)\|&\le c(z)\max\{R_1(z)({\bf f}(z)),\ldots,R_p(z)({\bf f}(z)),Q_{j_1}(z)({\bf f}(z)),\ldots,Q_{j_k}(z)({\bf f}(z))\}\\
&=c(z)\max_{1\le s\le k}\{Q_{j_s}(z)({\bf f}(z))\}.
\end{align*}
Also, as above arguments, we have 
$$\bigcap_{i=1}^{p'}R'_i(z)^*\cap\bigcap_{s=1}^kQ_{j_s}(z)^*\subset \bigcap_{i=1}^{p}R_i(z)^*\cap\bigcap_{s=1}^kQ_{j_s}(z)^*=\varnothing$$
for generic points $z\in\C^m$.
\end{proof}

For a subvariety $V$ and an analytic subset $S$ of $\P^N(\C)$, the codimension of $S$ in $V$ is defined by
$$ \mathrm{codim}_VS=\dim V-\dim (V\cap S).$$
\begin{definition}\label{3.3}
With the above notation, we define the distributive constant of the family $\{Q_1,\ldots,Q_q\}$ with respect to $f$ by
$$ \Delta_f:=\underset{\Gamma\subset\{1,\ldots,q\}}\max\dfrac{\sharp\Gamma}{\mathrm{codim}_{\bigcap_{i=1}^{p}R_i(z)^*}\left(\bigcap_{j\in\Gamma} Q_j(z)^*\right)}$$
for generic points $z\in\C^m$.
\end{definition}
Here, we note that $\dim\varnothing =-\infty$. By Lemma \ref{new3.1} and the above arguments, we see that the above definition is well-defined and does not depend on the choice of the minimal generating set $R_1,\ldots,R_p$ of $I(f(\C^m)).$

\begin{definition}
Let $V$ be a subvariety of $\P^N(\C)$ of dimension $n$. Assume that $V$ is not contained in any $Q_j(z)^*\ (1\le j\le q)$ generically. We define the distributive constant of $\{Q_1,\ldots,Q_q\}$ with respect to $V$ by
$$ \Delta_V:=\underset{\Gamma\subset\{1,\ldots,q\}}\max\dfrac{\sharp\Gamma}{\mathrm{codim}_V\left (\bigcap_{j\in\Gamma} Q_j(z)^*\right)} $$
for generic points $z\in\C^m$. 
\end{definition}

\begin{remark}\label{3.5}
{\rm If the image of $f$ is contained in $V$ and $Q_j\not\in I(f(\C^m))$ for all $j\in \{1,\ldots,q\}$ then $\Delta_f\le (\dim V-n_f+1)\Delta_V$.}
\end{remark}
Indeed, since $f(\C^m)\subset V$ and $Q_j\not\in I(f(\C^m))$ for all $j\in \{1,\ldots,q\}$, then $\bigcap_{i=1}^{p}R_i(z)^*\subset V$ and $\bigcap_{i=1}^{p}R_i(z)^*\not\subset\bigcap_{j\in\Gamma} Q_j(z)^*$ generically, and hence it is clear that
\begin{align*}
&\left(\dim V-\dim\bigcap_{i=1}^{p}R_i(z)^*+1\right)\left(\dim\bigcap_{i=1}^{p}R_i(z)^*-\dim\bigcap_{i=1}^{p}R_i(z)^*\cap\bigcap_{j\in\Gamma} Q_j(z)^*\right)\\
&\ge \dim V-\dim\bigcap_{i=1}^{p}R_i(z)^*+\dim\bigcap_{i=1}^{p}R_i(z)^*-\dim\bigcap_{i=1}^{p}R_i(z)^*\cap\bigcap_{j\in\Gamma} Q_j(z)^*\\
&\ge\dim V-\dim\bigcap_{j\in\Gamma} Q_j(z)^*,
\end{align*}
i.e.,
\begin{align*}
\left(\dim V-\dim\bigcap_{i=1}^{p}R_i(z)^*+1\right)\cdot\mathrm{codim}_{\bigcap_{i=1}^{p}R_i(z)^*}\left(\bigcap_{j\in\Gamma} Q_j(z)^*\right)\ge \mathrm{codim}_V\left (\bigcap_{j\in\Gamma} Q_j(z)^*\right)
\end{align*}
for generic points $z$ and all $\Gamma\subset\{1,\ldots,q\}.$ This straightforwardly implies that 
$$\Delta_f\le (\dim V-n_f+1)\Delta_V.$$

\begin{definition} With the above definition, the family $\{Q_1,\ldots,Q_q\}$ is said to be in weakly $l-$subgeneral position with respect to $V$ if for every $1\le j_0<\cdots<j_l\le q,$
$$ \bigcap_{s=0}^lQ_s(z)^*\cap V=\varnothing $$
for generic points $z\in\C^m$. The family $\{Q_1,\ldots,Q_q\}$ is said to be in weakly $l-$subgeneral position with respect to $f$ if for every $1\le j_0<\cdots<j_l\le q,$
$$ \bigcap_{s=0}^lQ_{j_s}(z)^*\cap \bigcap_{i=1}^pR_i(z)^*=\varnothing $$
for generic points $z\in\C^m$. If $l=\dim V$ (resp. $l=n_f$) then we say that $\{Q_1,\ldots,Q_q\}$ is in general position with respect to $V$ (resp. with respect to $f$).
\end{definition}
\begin{remark}\label{new38}{\rm

(a) If $Q_1,\ldots,Q_q\ (q\ge m+1)$ are in weakly $l-$subgeneral position with respect to $V$ then we may see that for every subset $\{Q_{j_1},\ldots,Q_{j_k}\}\ (1\le k\le l)$, one has
$$ \dim\bigcap_{i=1}^kQ_{j_i}(z)^*\le \min\{\dim V-1,l-k\},$$
generically, and hence
\begin{align*}
\Delta_V&\le\max\biggl\{\dfrac{1}{\dim V-(\dim V-1)},\ldots,\dfrac{l-\dim V+1}{\dim V-(\dim V-1)},\dfrac{l-\dim V+2}{\dim V-(\dim V-2)},\\
&\hspace{160pt}\ldots,\dfrac{l}{\dim V-(l-l)}\biggl\}=l-\dim V+1.
\end{align*}
(b)  If $Q_1,\ldots,Q_q\ (q\ge l+1)$ are in weakly $l-$subgeneral position with respect to $f$, then similarly we have 
$$ \Delta_f\le l-\dim\bigcap_{i=1}^pR_i(z)^*+1=l-n_f+1\ (\text{for generic points }z\in\C^m). $$}
\end{remark}

The following two lemmas play essential role in our proofs, which have been used in our recent work \cite{Q21} for fixed hypersufaces.
\begin{lemma}\label{3.2}
Let $V$ be a projective subvariety of $\P^N(\C)$ of dimension $n$. Let $Q_0,\ldots,Q_{l}$ be $l$ hypersurfaces in $\P^N(\C)$ of the same degree $d\ge 1$, such that $\bigcap_{i=0}^{l}Q_i^*\cap V=\varnothing$ and
$$\dim\left (\bigcap_{i=0}^{s}Q_i^*\right )\cap V=n-u\ \forall t_{u-1}\le s<t_u,1\le u\le n,$$
where $t_0,t_1,\ldots,t_n$ integers with $0=t_0<t_1<\cdots<t_n=l$. Then there exist $n+1$ hypersurfaces $P_0,\ldots,P_n$ in $\P^N(\C)$ of the forms
$$P_u=\sum_{j=0}^{t_{u}}c_{uj}Q_j, \ c_{uj}\in\C,\ u=0,\ldots,n,$$
such that $\left (\bigcap_{u=0}^{n}P_u^*\right )\cap V=\varnothing.$
\end{lemma}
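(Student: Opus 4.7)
The plan is to construct $P_0,P_1,\ldots,P_n$ inductively, maintaining the invariant that $\dim\bigl(V\cap P_0^*\cap\cdots\cap P_u^*\bigr)\le n-u-1$ for each $u=0,1,\ldots,n$; applying this at $u=n$ gives dimension $\le -1$, i.e., the desired empty intersection. I set $P_0:=Q_0\in\mathrm{span}_\C\{Q_0,\ldots,Q_{t_0}\}$; the hypothesis at $s=0=t_0$ then yields $\dim(V\cap P_0^*)=n-1$.

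For the inductive step, suppose $P_0,\ldots,P_u$ have been constructed with $P_v\in\mathrm{span}_\C\{Q_0,\ldots,Q_{t_v}\}$ and $\dim W^{(u)}\le n-u-1$, where $W^{(u)}:=V\cap\bigcap_{v=0}^u P_v^*$. Since each $P_v$ is a $\C$-linear combination of $Q_0,\ldots,Q_{t_v}$ and $t_v\le t_u$, we have $P_v^*\supset\bigcap_{i=0}^{t_u}Q_i^*$, and hence $W^{(u)}\supset V\cap\bigcap_{i=0}^{t_u}Q_i^*$; since the right-hand side has dimension $n-u-1$ by hypothesis, in fact $\dim W^{(u)}=n-u-1$. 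Let $W_1,\ldots,W_k$ denote the irreducible components of $W^{(u)}$ of this maximal dimension.

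The key geometric claim is that no $W_j$ can be contained in $\bigcap_{i=0}^{t_{u+1}}Q_i^*$: otherwise $W_j\subset V\cap\bigcap_{i=0}^{t_{u+1}}Q_i^*$, whose dimension is $n-u-2$ (or which is empty when $u=n-1$) by the hypothesis, contradicting $\dim W_j=n-u-1$. Consequently, for each $j$ the set
$$L_j:=\Bigl\{(c_0,\ldots,c_{t_{u+1}})\in\C^{t_{u+1}+1}:\sum_{i=0}^{t_{u+1}}c_iQ_i\equiv 0\text{ on }W_j\Bigr\}$$
is a proper linear subspace of $\C^{t_{u+1}+1}$. Since a finite union of proper subspaces cannot cover the ambient space, I pick $(c_{u+1,0},\ldots,c_{u+1,t_{u+1}})$ outside $\bigcup_{j=1}^k L_j$ and set $P_{u+1}:=\sum_{i=0}^{t_{u+1}}c_{u+1,i}Q_i$. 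Then $P_{u+1}$ does not vanish identically on any $W_j$, so $W_j\cap P_{u+1}^*$ is a proper closed subvariety of the irreducible $W_j$, of dimension $\le n-u-2$; combined with the already lower-dimensional components of $W^{(u)}$, this yields $\dim W^{(u+1)}\le n-u-2$ and closes the induction.

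The main, though rather mild, obstacle is the geometric claim that each maximal-dimensional component $W_j$ escapes $\bigcap_{i=0}^{t_{u+1}}Q_i^*$; once this is secured from the prescribed dimension drops, the linear-algebra step of avoiding a finite union of proper subspaces is routine, and the span condition $P_v\in\mathrm{span}_\C\{Q_0,\ldots,Q_{t_v}\}$ is preserved automatically by the construction.
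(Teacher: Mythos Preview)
Your proof is correct and follows essentially the same inductive construction as the paper: set $P_0=Q_0$, and at each step choose coefficients for $P_{u+1}$ avoiding the proper linear subspaces of coefficient vectors that make the combination vanish on a top-dimensional component of $V\cap P_0^*\cap\cdots\cap P_u^*$. Your write-up is in fact slightly tighter than the paper's, since you correctly note that an algebraic set has only \emph{finitely} many irreducible components (the paper invokes a countable union), and you explicitly pin down $\dim W^{(u)}=n-u-1$ via the inclusion $W^{(u)}\supset V\cap\bigcap_{i=0}^{t_u}Q_i^*$.
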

\begin{proof} Set $P_0=Q_0$. We will construct $P_1,\ldots,P_n$ as follows.

Step 1. Firstly, we will construct $P_1$. For each irreducible component $\Gamma$ of dimension $n-1$ of $P_0^*\cap V$, we put 
$$V_{1\Gamma}=\{c=(c_0,\ldots,c_{t_1})\in \C^{t_1+1}\ ;\ \Gamma\subset Q_c^*,\text{ where }Q_c=\sum_{j=0}^{t_1}c_jQ_j\}.$$
Here, $Q_c$ may be zero polynomial and its support $Q^*_c$ is $\P^N(\C)$. We see that $V_{1\Gamma}$ is a subspace of $\C^{t_1+1}$. Since $\dim \left(\bigcap_{j=0}^{t_1}Q_j^*\right)\cap V\le n-2$, there exists $i \ (1\le i\le t_1+1)$ such that $\Gamma\not\subset Q_i^*$. Then $V_{1\Gamma}$ is a proper subspace of $\C^{t_1+1}$. Since the set of irreducible components of dimension $n-1$ of $P_0^*\cap V$ is at most countable, 
$$ \C^{t_1+1}\setminus\bigcup_{\Gamma}V_{1\Gamma}\ne\varnothing. $$
Hence, there exists $(c_{10},c_{11},\ldots,c_{1t_1})\in \C^{t_1+1}$ such that
$$ \Gamma\not\subset P_1^*$$
for all irreducible components $\Gamma$ of dimension $n-1$ of $P_0^*\cap V$, where
$P_1=\sum_{j=0}^{t_1}c_{1j}Q_j.$
This implies that $\dim \left(P_0^*\cap P_1^*\right)\cap V\le n-2.$

Step 2. For each irreducible component $\Gamma'$ of dimension $n-2$ of $\left(P_0^*\cap P_1^*\right)\cap V$, put 
$$V_{2\Gamma'}=\{c=(c_0,\ldots,c_{t_2})\in \C^{t_2+1}\ ;\ \Gamma\subset Q_c^*,\text{ where }Q_c=\sum_{j=0}^{t_2}c_jQ_j\}.$$
Hence, $V_{2\Gamma'}$ is a subspace of $\C^{t_2+1}$. Since $\dim \left(\bigcap_{i=0}^{t_2}Q_i^*\right)\cap V\le n-3$, there exists $i, (0\le i\le t_2)$ such that $\Gamma'\not\subset Q_i^*$. Then, $V_{2\Gamma'}$ is a proper subspace of $\C^{t_2+1}$. Since the set of irreducible components of dimension $n-2$ of $\left(P_0^*\cap P_1^*\right)\cap V$ is at most countable, 
$$ \C^{t_2+1}\setminus\bigcup_{\Gamma'}V_{2\Gamma'}\ne\varnothing. $$
Therefore, there exists $(c_{20},c_{21},\ldots,c_{2t_2})\in \C^{t_2+1}$ such that
$$ \Gamma'\not\subset P_3^*$$
for all irreducible components of dimension $n-2$ of $P_0^*\cap P_1^*\cap V$, where
$P_3=\sum_{j=0}^{t_2}c_{2j}Q_j.$
It implies that $\dim \left(P_1^*\cap P_2^*\cap P_3^*\right)\cap V\le n-3.$

Repeating again the above steps, after the $n^{\rm th}$-step we get hypersurfaces $P_0,\ldots,P_n$ satisfying
$$ \dim\left(\bigcap_{j=0}^tP_j^*\right)\cap V\le n-t-1\ (0\le t\le n). $$
In particular, $\left(\bigcap_{j=0}^{n}P_j^*\right)\cap V=\varnothing.$ We complete the proof of the lemma.
\end{proof}

\begin{lemma}
\label{3.1}
Let $t_0,t_1,\ldots,t_n$ be $n+1$ integers such that $1=t_0<t_1<\cdots <t_n$, and let $\Delta =\underset{1\le s\le n}\max\dfrac{t_s-t_0}{s}$. 
Then for every $n$ real numbers $a_0,a_1,\ldots,a_{n-1}$  with $a_0\ge a_1\ge\cdots\ge a_{n-1}\ge 1$, we have
$$ a_0^{t_1-t_0}a_1^{t_2-t_1}\cdots a_{n-1}^{t_{n}-t_{n-1}}\le (a_0a_1\cdots a_{n-1})^{\Delta}.$$
\end{lemma}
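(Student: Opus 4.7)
The plan is to take logarithms and reduce the multiplicative inequality to a linear one, then apply Abel summation so that the definition of $\Delta$ can be used termwise.

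First I would set $b_s=\log a_s$ for $s=0,\ldots,n-1$; since $a_s\ge 1$ and the sequence $(a_s)$ is non-increasing, we have $b_0\ge b_1\ge\cdots\ge b_{n-1}\ge 0$. The desired inequality is equivalent to
$$\sum_{s=0}^{n-1}(t_{s+1}-t_s)\,b_s\ \le\ \Delta\sum_{s=0}^{n-1}b_s.$$
Next, I would introduce the consecutive differences $c_s:=b_s-b_{s+1}$ for $s=0,\ldots,n-2$ and $c_{n-1}:=b_{n-1}$. By hypothesis $c_s\ge 0$ for every $s$, and $b_k=\sum_{s=k}^{n-1}c_s$.

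Substituting this into both sides and switching the order of summation gives telescoping identities. For the left-hand side,
$$\sum_{k=0}^{n-1}(t_{k+1}-t_k)\,b_k=\sum_{s=0}^{n-1}c_s\sum_{k=0}^{s}(t_{k+1}-t_k)=\sum_{s=0}^{n-1}c_s\,(t_{s+1}-t_0),$$
while for the right-hand side,
$$\sum_{k=0}^{n-1}b_k=\sum_{s=0}^{n-1}c_s\,(s+1).$$
Thus the inequality becomes
$$\sum_{s=0}^{n-1}c_s\,(t_{s+1}-t_0)\ \le\ \Delta\sum_{s=0}^{n-1}c_s\,(s+1).$$

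Finally, by the very definition of $\Delta=\max_{1\le s\le n}(t_s-t_0)/s$, for each $s\in\{0,\ldots,n-1\}$ we have $t_{s+1}-t_0\le\Delta(s+1)$. Since $c_s\ge 0$, the inequality follows term by term. I do not expect a genuine obstacle here: the only thing to be careful about is the bookkeeping of the Abel summation and the boundary term $c_{n-1}=b_{n-1}$, which is handled automatically because $t_0=1$ appears explicitly in the definition of $\Delta$ through the numerator $t_s-t_0$.
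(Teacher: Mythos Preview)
Your proof is correct and is considerably more direct than the argument in the paper. Taking logarithms and applying Abel summation reduces the claim to the termwise inequality $t_{s+1}-t_0\le\Delta(s+1)$, which is exactly the definition of $\Delta$; the check of the double-sum interchange and the telescoping is routine and you have carried it out correctly.

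The paper, by contrast, works multiplicatively throughout. It fixes an index $s$ where $\Delta=(t_s-t_0)/s$ is attained, derives the auxiliary inequalities $t_s-t_{s-k}\ge k\Delta$ and $t_{s+k}-t_s\le k\Delta$, and then introduces a recursively defined sequence $m_i$ (with $m_n=\Delta$, $m_{i-1}=t_i-t_{i-1}+\max\{0,m_i-\Delta\}$) that tracks the ``excess'' exponent as one walks down from $a_{n-1}$ to $a_0$. The key step is the pointwise inequality $a_i^{t_{i+1}-t_i}a_{i+1}^{m_{i+1}}\le a_i^{m_i}a_{i+1}^{\Delta}$, which is iterated to replace each exponent by $\Delta$. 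This machinery amounts to a discrete potential argument that encodes the same convexity your Abel summation exploits, but in a less transparent way. Your approach is shorter and makes the role of $\Delta$ as a uniform slope bound completely explicit; the paper's approach avoids logarithms but at the cost of a more delicate induction. One small remark: your closing sentence about $t_0=1$ is not quite to the point---the argument uses only that $t_0$ is the common base of the ratios defining $\Delta$, not its specific value.
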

\begin{proof}
Let $s$ be an index, $1\le s\le n$ such that $\Delta=\dfrac{t_s-t_0}{s}$. 
\begin{itemize}
\item For every $1\le k\le s-1$, we have $\Delta=\dfrac{t_s-t_0}{s}\ge\dfrac{t_{s-k}-t_0}{s-k}$. This implies that 
$$\Delta=\dfrac{t_s-t_0}{s}\le\dfrac{t_s-t_0-(t_{s-k}-t_0)}{s-(s-k)}=\dfrac{t_s-t_{s-k}}{k},\text{ i.e., }t_s-t_{s-k}\ge k\Delta.$$
\item 
Similarly, for every $1\le k\le n-s$, we have $\Delta=\dfrac{t_s-t_0}{s}\ge\dfrac{t_{s+k}-t_0}{s+k}$. Then
$$\Delta=\dfrac{t_s-t_0}{s}\ge\dfrac{t_{s+k}-t_0-(t_s-t_0)}{(s+k)-s}=\dfrac{t_{s+k}-t_k}{k},\text{ i.e., }t_{s+k}-t_s\le k\Delta.$$
\end{itemize}
We set $m_n=\Delta$ and define
\begin{align*}
m_{n-1}&=t_{n}-t_{n-1}+\max\{0,m_n-\Delta\}, \\
m_{n-2}&=t_{n-1}-t_{n-2}+\max\{0,m_{n-1}-\Delta\},\\
\ldots&\ldots\\
m_{0}&=t_{1}-t_{0}+\max\{0,m_1-\Delta\}.
\end{align*}

We see that the set $\{i| s\le i\le n, m_i\le\Delta\}$ is not empty since $m_n=\Delta$. We set 
$$u=\min\{i| s\le i\le n, m_i\le\Delta\}.$$ 
Suppose that $u>s$ then $m_j>\Delta\ (\forall s\le j\le u-1)$, and hence we have the following estimate:
\begin{align*}
m_{u-1}&=t_u-t_{u-1}>\Delta,\\ 
m_{u-2}&=t_{u-1}-t_{u-2}+m_{u-1}-\Delta=t_u-t_{u-2}-\Delta>\Delta,\\
m_{u-3}&=t_{u-2}-t_{u-3}+m_{u-2}-\Delta=t_u-t_{u-3}-2\Delta>\Delta,\\ 
\ldots&\ldots\\
m_{s}&=t_{s+1}-t_{s}+m_{s+1}-\Delta=t_u-t_s-(u-s-1)\Delta>\Delta.
\end{align*}
This implies that
$$ t_u-t_s>(u-s)\Delta. $$
This is a contradiction. Therefore $u=s$, and hence $m_s\le \Delta$.

We also have the following estimate:
\begin{align*}
m_{s-1}&=t_{s}-t_{s-1}\ge\Delta,\\
m_{s-2}&=t_{s-1}-t_{s-2}+m_{s-1}-\Delta=t_s-t_{s-2}-\Delta\ge 2\Delta-\Delta=\Delta,\\
m_{s-3}&=t_{s-2}-t_{s-3}+m_{s-2}-\Delta=t_s-t_{s-3}-2\Delta\ge 3\Delta-2\Delta=\Delta,\\
\ldots&\ldots\\
m_{1}&=t_{2}-t_{1}+m_{2}-\Delta=t_s-t_{1}-(s-2)\Delta\ge (s-1)\Delta-(s-2)\Delta=\Delta,\\
m_{0}&=t_{1}-t_{0}+m_{1}-\Delta=t_s-t_{0}-(s-1)\Delta= s\Delta-(s-1)\Delta=\Delta.
\end{align*}
On the other hand, for each $i\in\{0,\ldots,n-2\}$ we have
$$\dfrac{a_i^{m_i}a_{i+1}^{\Delta}}{a_i^{t_{i+1}-t_i}a_{i+1}^{m_{i+1}}}=\dfrac{a_{i}^{\max\{0,m_{i+1}-\Delta\}}}{a_{i+1}^{m_{i+1}-\Delta}}\ge\left(\dfrac{a_i}{a_{i+1}}\right)^{\max\{0,m_{i+1}-\Delta\}}\ge 1,$$
i.e.,
$$a_i^{t_{i+1}-t_i}a_{i+1}^{m_{i+1}}\le a_i^{m_i}a_{i+1}^{\Delta}.$$
Then, we easily have that
\begin{align*}
a_0^{t_1-t_0}a_1^{t_2-t_1}\cdots a_{n-1}^{t_{n}-t_{n-1}}&= a_0^{t_1-t_0}a_1^{t_2-t_1}\cdots a_{n-2}^{t_{n-1}-t_{n-2}}a_{n-1}^{m_{n-1}}\\ 
&\le a_0^{t_1-t_0}a_1^{t_2-t_1}\cdots a_{n-2}^{t_{n-2}-t_{n-3}}a_{n-2}^{m_{n-2}}a_{n-1}^{\Delta}\\
&\le a_0^{t_1-t_0}a_1^{t_2-t_1}\cdots a_{n-2}^{t_{n-3}-t_{n-4}}a_{n-3}^{m_{n-3}}a_{n-2}^{\Delta}a_{n-1}^{\Delta}\\
&\ldots\\
&\le a_0^{t_1-t_0}a_1^{m_1}a_2^\Delta\cdots a_{n-1}^{\Delta}\\
&\le a_0^{m_0}a_1^{\Delta}a_2^\Delta\cdots a_{n-1}^{\Delta}\\
&=(a_0a_1\cdots a_{n-1})^{\Delta}.
\end{align*}
The lemma is proved.
\end{proof}

\section{General second main theorems for moving hypersurfaces}

In oder to prove the main theorem, we will first prove a general form of second main theorem for moving hyperplanes.

For a subset $\Psi\subset \mathcal{M}$ we denote by $\mathcal L(\Psi)$ the $\C$-vector space spanned by $\Psi$ over $\C$.
Assume that $q:=\sharp\Psi<\infty$, and $1 \in \Psi$. Then for a positive integer $p$, we set
$\Psi(p)=\{\varphi_1\varphi_2\cdots\varphi_p |\varphi_j\in\Psi;j=1,\ldots , p \}$. Then
$$1\in\Psi(p),\quad \Psi({p}) \subset \Psi({p+1}), \quad\sharp\Psi(p)=\binom{p+q-1}{p}=\binom{p+q-1}{q-1}.$$
Let $0< \epsilon<1$ be arbitrarily given. Then there exists a smallest integer, always denoted by $p$ in this section, such that
$$ \dfrac{\dim \mathcal L(\Psi(p+1))}{\dim \mathcal L (\Psi(p))}\leq (1+\epsilon).$$
Remark: In \cite{QT10}, we have the following estimate:
\begin{align}\label{4.1}
\dim\mathcal L(\Psi(p+1))\le \binom{p+q-1}{q-1}\leq\bigl [(1+\epsilon )^{[\frac{q}{\log^2(1+\epsilon)}]+1}\bigl ].
\end{align}
 
\begin{theorem}\label{4.2}
Let $f$ be a linearly nondegenerate meromorphic mapping of $\C^m$ in $\P^N(\C)$ with a reduced representation ${\bf f}=(f_0,...,f_N)$ and let $H_1,..,H_q$ be $q$ arbitrary slowly (with respect to $f$) moving hyperplanes in $\P^N(\C)$. Then for every $\epsilon >0,$
$$ \biggl\|\ \int_{S(r)}\max_{K}\log\left (\prod_{j\in K}\frac{\|{\bf f}\|}{|H_j({\bf f})|}\right)\sigma_{m}\le (N+1+\epsilon)T_f(r),$$
where the maximum is taken over all subsets $K\subset\{1,...,q\}$ such that $\{H_j\ ;\ j\in K\}$ is linearly independent over the field $\mathcal M$.
\end{theorem}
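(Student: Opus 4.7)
The plan is to reduce Theorem~\ref{4.2} to Ru's second main theorem for algebraically nondegenerate mappings into a projective subvariety, applied to an auxiliary linearly nondegenerate mapping $\tilde F$ that absorbs the meromorphic coefficients of the $H_j$ into its target. The $(1+\epsilon)$ improvement in the final bound comes from the dimensional ratio $s'/s\le 1+\epsilon$ guaranteed by the choice of $p$.

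Let $\Psi\subset\mathcal M$ be a finite set containing $1$ together with enough coefficient ratios of the $H_j$'s that each $H_j$ has coefficients in $\mathcal L(\Psi(1))$. Set $s=\dim\mathcal L(\Psi(p))$ and $s'=\dim\mathcal L(\Psi(p+1))$, and fix $\C$-bases $\{\psi_\ell\}_{\ell=1}^s$ and $\{\varphi_m\}_{m=1}^{s'}$. Define the auxiliary meromorphic mapping
\[\tilde F=(\varphi_m f_j)_{1\le m\le s',\,0\le j\le N}:\C^m\to\P^{s'(N+1)-1}(\C).\]
The identity $\|\tilde F\|^2=\bigl(\sum_m|\varphi_m|^2\bigr)\|\mathbf f\|^2$ and the slow growth of the $\varphi_m$'s give $T_{\tilde F}(r)=T_f(r)+o(T_f(r))$, and a direct rearrangement argument using the linear nondegeneracy of $f$ (understood over the coefficient field $\mathcal K_{\{H_j\}}$) shows that $\tilde F$ is linearly nondegenerate over $\C$.

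For each pair $(\ell,j)\in\{1,\dots,s\}\times\{1,\dots,q\}$ the coefficients $\psi_\ell a_{jn}$ of $\psi_\ell H_j$ lie in $\mathcal L(\Psi(p+1))$; expanding each one in the basis $\{\varphi_m\}$ produces unique $\C$-constants $b^{(\ell,j)}_{mn}$ such that the fixed hyperplane $\tilde L_{\ell,j}=\sum_{m,n}b^{(\ell,j)}_{mn}y_{mn}$ on $\P^{s'(N+1)-1}$ satisfies $\tilde L_{\ell,j}(\tilde F)=\psi_\ell H_j(\mathbf f)$. For any admissible $K=\{j_1,\dots,j_{N+1}\}$, the $s(N+1)$ lifted forms $\{\tilde L_{\ell,j_k}\}_{\ell,k}$ are $\C$-linearly independent, which reduces to the $\mathcal M$-linear independence of the $H_{j_k}$. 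From $\|\tilde F\|=\|\varphi\|\|\mathbf f\|$ one then derives the pointwise identity
\[s\log\prod_{k=1}^{N+1}\frac{\|\mathbf f\|}{|H_{j_k}(\mathbf f)|}=\sum_{\ell,k}\log\frac{\|\tilde F\|}{|\tilde L_{\ell,j_k}(\tilde F)|}+(N+1)\Bigl(s\log\|\varphi\|-\sum_\ell\log|\psi_\ell|\Bigr),\]
and after $\max_K$, integration, and absorbing the last bracket into $o(T_f(r))$ by slow growth of the $\psi_\ell,\varphi_m$,
\[s\int_{S(r)}\max_K\log\prod_{j\in K}\frac{\|\mathbf f\|}{|H_j(\mathbf f)|}\sigma_m\le\int_{S(r)}\max_K\sum_{\ell,k}\log\frac{\|\tilde F\|}{|\tilde L_{\ell,j_k}(\tilde F)|}\sigma_m+o(T_f(r)).\]

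The image of $\tilde F$ lies in the Segre variety $V=\mathrm{Seg}(\P^{s'-1}\times\P^N)\subset\P^{s'(N+1)-1}$ of dimension $s'+N-1$. Viewing $\tilde F$ as algebraically nondegenerate into the Zariski closure $V_{\tilde F}\subseteq V$ of its image, and applying Ru's second main theorem (using the Chow/Hilbert weight framework of Theorem~\ref{2.3} and Lemma~\ref{2.4}) with the fixed hyperplanes $\{\tilde L_{\ell,j}\}$ yields
\[\int_{S(r)}\max_K\sum_{\ell,k}\log\frac{\|\tilde F\|}{|\tilde L_{\ell,j_k}(\tilde F)|}\sigma_m\le\sum_{\ell,j}m_{\tilde F}(r,\tilde L_{\ell,j})\le(\dim V_{\tilde F}+1+\epsilon)T_{\tilde F}(r)+o(T_{\tilde F}(r)).\]
Substituting $T_{\tilde F}=T_f+o(T_f)$, dividing by $s$, and using $\dim V_{\tilde F}\le s'+N-1$, $s'/s\le 1+\epsilon$, and $N/s\le N$ (since $s\ge 1$), one gets $(\dim V_{\tilde F}+1)/s\le(s'+N)/s\le(1+\epsilon)+N=N+1+\epsilon$, which gives the desired bound. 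The main obstacle is ensuring that Ru's theorem applies with this sharp multiplier: one must control the general position of $\{\tilde L_{\ell,j}\}$ on $V_{\tilde F}$, and when strict general position fails, one passes to a subgeneral position bound, absorbing the resulting distributive constant (in the sense of Definition~\ref{3.3}) into the slack in the inequality $(s'+N)/s\le N+1+\epsilon$.
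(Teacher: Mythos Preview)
Your reduction up to the displayed identity
\[
s\log\prod_{k=1}^{N+1}\frac{\|\mathbf f\|}{|H_{j_k}(\mathbf f)|}
=\sum_{\ell,k}\log\frac{\|\tilde F\|}{|\tilde L_{\ell,j_k}(\tilde F)|}+(\text{small terms})
\]
is fine and is in fact the same lift the paper performs (the paper writes the basis elements as $b_1,\dots,b_t$ with $t=s'$ and arrives at exactly the families $\{b_\ell H_{j_k}(\mathbf f)\}$ and $\{b_\ell f_n\}$). The gap is in what you do next.

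You pass from the max-form integral to $\sum_{\ell,j}m_{\tilde F}(r,\tilde L_{\ell,j})$ and then invoke a second main theorem on the Segre variety $V_{\tilde F}$. Both moves are problematic. First, replacing the max over admissible $K$ by the full sum over all $sq$ hyperplanes destroys the only structure you have: once you are summing over all $\tilde L_{\ell,j}$, a sum-form SMT on $V_{\tilde F}$ requires those hyperplanes to be in (weakly) general position on $V_{\tilde F}$, and they are not. For a fixed $j$ the sections $\tilde L_{1,j},\dots,\tilde L_{s,j}$ cut out highly degenerate loci on the Segre image, and there is no reason for the associated distributive constant to be bounded independently of $s,q$. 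Second, your claim that the extra factor can be ``absorbed into the slack in $(s'+N)/s\le N+1+\epsilon$'' is empty: that inequality is already saturated (take $s=1$, $s'=1+\epsilon$), so there is no slack to absorb anything. As written, the argument does not close.

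The repair is to abandon the Segre variety entirely and stay in the ambient $\P^{s'(N+1)-1}(\C)$. You have already checked that $\tilde F$ is linearly nondegenerate there and that for each admissible $K$ the family $\{\tilde L_{\ell,j_k}:1\le\ell\le s,\ k\in K\}$ is a set of $s(N+1)$ \emph{fixed} linearly independent hyperplanes. Applying Cartan's inequality (equivalently Ru's 1997 max-form theorem, which needs no position hypothesis) to $\tilde F$ gives the bound $s'(N+1)\,T_{\tilde F}(r)+o(T_{\tilde F}(r))$; dividing by $s$ and choosing $p$ so that $s'/s\le 1+\epsilon/(N+1)$ (not $1+\epsilon$ as you wrote) yields the result. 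This is precisely what the paper does: rather than citing an external max-form theorem, it carries out Cartan's Wronskian argument directly for the $(N+1)t$ functions $b_\ell f_n$, with the logarithmic-derivative lemma supplying the $o(T_f(r))$ term. So once your proof is corrected, it coincides with the paper's; the detour through the Segre variety and the Chow/Hilbert machinery is both unnecessary and the source of the error.
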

The version of this theorem for fixed hyperplanes is firstly proved by M. Ru in \cite[Theorem 2.3]{Ru97}. For our purpose in this paper, we need this version for moving hyperplanes.
\begin{proof}
By adding more fixed hyperplanes if necessary, we will prove that 
$$ \biggl\|\ \int_{S(r)}\max_{K}\log\left (\prod_{j\in K}\frac{\|{\bf f}\|}{|H_j({\bf f})|}\right)\sigma_{m}\le (N+1+\epsilon)T_f(r),$$
where the maximum is taken over all subsets $K\subset\{1,...,q\}$ such that $\sharp K=N+1$ and $\{H_j| j\in K\}$ is linearly independent over the field $\mathcal M$ of all meromorphic functions on $\C^m$.

Without loss of generality we may assume that
$$ H_i(z)(x_0,\ldots,x_N)=\sum_{j=0}^Na_{ij}(z)x_j,$$
where all $a_{ij}$ are small functions with respect to $f$ and $a_{i0}=1$.
We put $\Psi=\{a_{ij}\}$. By Lemma \ref{3.3} there exists a positive integer $p$ such that 
$$\frac{\dim \mathcal L(\Psi(p+1))}{\dim \mathcal L(\Psi(p))}\leq 1+\dfrac{\epsilon}{N+1}.$$

We put
$$
s=\dim \mathcal{L}(\Psi(p)),\qquad
t=\dim \mathcal{L}(\Psi(p)+1)).
$$ 
Let $\{b_1, \ldots ,b_s\}$ be a base of $\mathcal L(\Psi(p))$ and $\{b_1, \ldots ,b_t\}$ be a base of $\mathcal L(\Psi(p+1))$. Then $\dfrac{t}{s}\leq 1+\epsilon$ and
$\{b_jf_k(1\leq i\leq t,0\leq k\leq N)\}$ is linearly independent over $\C$.

Let $K=\{i_1,\ldots,i_{N+1}\}$ be a set such that $\{H_j|j\in K\}$ is linearly independent over $\mathcal M$, where $1\le i_1<\cdots<i_{N+1}\le q$. It is easy to see that $\{b_jH_{i_v}({\bf f}) (1\leq j\leq s,1\le v\le N+1)\}$ is linearly independent over  $\C$. Then we may choose  $\beta ^{kl}_{mj}\in \C$ such that there is  $C_{K}\in GL((N+1)t;\C)$ such that
\begin{align*}
&\det (b_jH_{i_v}({\bf f}) (1\leq j\leq s,1\leq v\leq N+1), h_{ul}(s+1\leq l \leq t,0\leq u\leq N))\\
&=C_{K}\det (b_jf_k(1\leq j\leq t,0\leq k\leq N)),
\end{align*}
where $h_{ul}=\sum_{1\leq k\leq t,0\leq l'\leq N}\beta^{kl'}_{uj}b_kf_{l'}(s+1\leq j\leq t,0\leq u\leq N)$, and $C_{K}$ is a nonzero constant.

Let  $\alpha:=(\alpha_1, \ldots ,\alpha_{(N+1)t})\in (\Z_+^m)^{(N+1)t}$
be the admissible set such that 
$$W\equiv\det\bigl (\mathcal D^{\alpha_w}b_jf_k(1\leq j\leq t,0\leq k\leq N)\bigl )_{1\leq w\leq (N+1)t}\not\equiv 0.$$
Note that $|\alpha_i|\leq (N+1)t-1,\forall 1\leq i\leq (N+1)t$. 
Set
$$W_{K}\equiv\det\bigl (\mathcal D^{\alpha_w}b_jH_{i_v}({\bf f}),\mathcal D^{\alpha_w}h_{ul}\bigl ),$$
where $1\leq j\leq t, 1\leq v\leq N+1$, $s+1\leq l \leq t, 0\leq u\leq N$, and $1\leq w\leq (N+1)t$.
It is easy to see that $W_{K}=C_K\cdot W$. 

This implies that, for $K=\{i_1,\ldots,i_{N+1}\}$,
\begin{align*}
s\log\left (\prod_{j\in K}\frac{\|{\bf f}\|}{|H_j({\bf f})|}\right)&\le (N+1)t\log \|{\bf f}\|+\log\left (\frac{1}{(\prod_{j\in K}|H_j({\bf f})|)^s\|{\bf f}\|^{(N+1)(t-s)}}\right)\\
&\le (N+1)\log \|{\bf f}\|+\sum_{K}\log^{+}\left (\frac{|W_K|}{(\prod_{j\in K}|H_j({\bf f})|)^s\|{\bf f}\|^{(N+1)(t-s)}}\right)\\
&\quad\quad -\log|W|+O(1).
\end{align*}
Integrating both sides of the above of the above inequality over $S(r)$ and applying the lemma on logarithmic derivative, we get
\begin{align*}
\biggl\|\ \int_{S(r)}\max_{K}\log\left (\prod_{j\in K}\frac{\|{\bf f}\|}{|H_j({\bf f})|}\right)\sigma_{m}&\le\dfrac{(N+1)t}{s}T_f(r)-\dfrac{1}{s}N_{W}(r)+o(T_f(r))\\
&\le (N+1+\epsilon)T_f(r).
\end{align*}
The theorem is proved.
\end{proof}

\begin{proof}[{\bf Proof of Theorem \ref{1.1}}]

Take $z_0$ be a point such that all coefficients of $R_1,\ldots,R_\lambda$ are holomorphic at $z_0$ and 
$$ \Delta_f =\max_{\Gamma\subset\{1,\ldots,q\}}\dfrac{\sharp\Gamma}{\dim\bigcap_{i=1}^\lambda R_i(z_0)^*-\dim\bigcap_{i=1}^\lambda R_i(z_0)^*\cap\bigcap_{j\in\Gamma}Q_i(z_0)^*},$$
where $\{R_1,\ldots,R_\lambda\}$ is a minimal subset of $\mathcal K[x_0,\ldots,x_N]$, which generates $I(f(\C^m))$. 
Let $V_z=\bigcap_{i=1}^\lambda R_i(z)^*$, which is a projective subvarieties of $\P^N(\C)$ of dimension $n_f$ for generic points $z\in\C^m$. We may suppose that $\dim V_{z_0}=n_f$. It is suffice for us to consider the case where $\Delta_f<\dfrac{q}{n_f+1}$. Note that $\Delta_f\ge 1$, and hence $q>n_f+1$. If there exists $i\in\{1,\ldots,q\}$ such that $\bigcap_{\underset{j\ne i}{j=1}}^qQ_j(z_0)^*\cap V_{z_0}\ne\varnothing$ then 
$$ \Delta_f\ge\dfrac{q-1}{n_f}>\dfrac{q}{n_f+1}.$$
This is a contradiction. Therefore, $\bigcap_{\underset{j\ne i}{j=1}}^qQ_j(z_0)^*\cap V_{z_0}=\varnothing$ and hence $\bigcap_{\underset{j\ne i}{j=1}}^qQ_j(z)^*\cap V_{z}=\varnothing$ generically, for all $i\in\{1,2,\ldots,q\}$.

Since the number of slowly moving hypersurfaces occurring in this proof is finite, we may choose a function $c\in\mathcal C_{\mathcal K}$ such that for each given slowly moving hypersurface $Q$ in this proof, we have
$$ Q(z)({\bf x})\le c(z)\|{\bf x}\|^{\deg Q} $$
for all ${\bf x}=(x_0,\ldots,x_N)\in\C^{N+1}$, $z\in\C^m$. 

By usual argument, it is suffice for us to prove the theorem only for the case where all hypersurfaces $Q_i\ (1\le i\le q)$ are of the same degree $d$. 
We denote by $\mathcal I$ the set of all bijections from $\{0,\ldots,q-1\}$ into $\{1,\ldots,q\}$. Denote by $n_0$ the cardinality of $\mathcal I$, $n_0=q!$, and we write
$\mathcal I=\{I_1,\ldots,I_{n_0}\}$,
where $I_i=(I_i(0),\ldots,I_i(q-1))\in\mathbb N^q$ and $I_1<I_2<\cdots <I_{n_0}$ in the lexicographic order.

For each $I_i\in\mathcal I$, since $\bigcap_{j=0}^{q-2}Q_{I_i(j)}(z_0)^*\cap V_{z_0}=\varnothing$, there exist $n_f+1$ integers $t_{i,0},t_{i,1},\ldots,t_{i,n_f}$ with $0=t_{i,0}<\cdots<t_{i,n_f}=l_i$, where $l_i\le q-2$ such that $\bigcap_{j=0}^{l_i}Q_{I_i(j)}(z_0)^*\cap V_{z_0}=\varnothing$ and
$$\dim\left (\bigcap_{j=0}^{s}Q_{I_i(j)}(z_0)^*\right )\cap V_{z_0}=n_f-u\ \forall t_{i,u-1}\le s<t_{i,u},1\le u\le n_f.$$
Then, $\Delta_f \ge\dfrac{t_{i,u}-t_{i,0}}{u}$ for all $1\le u\le n_f.$ Denote by $P'_{i,0},\ldots,P'_{i,n_f}$ the hypersurfaces obtained in Lemma \ref{3.2} with respect to the hypersurfaces $Q_{I_i(0)}(z_0),\ldots,Q_{I_i(l_i)}(z_0)$. Now, for each $P'_{i,j}$ constructed by
$$ P'_{i,j}=\sum_{s=0}^{t_{i,j}}a_{i,j,s}Q_{I_i(s)}(z_0)\ (a_{i,j,s}\in\C) $$
we define
$$ P_{i,j}(z)=\sum_{s=0}^{t_{i,j}}a_{i,j,s}Q_{I_i(s)}(z).$$
Hence $\{P_{i,j}\}_{j=0}^{n_f}$ is a family of moving hypersurfaces in $\P^N(\C)$ with $P_{i,j}(z_0)=P'_{i,j}$. Then $\bigcap_{j=0}^{n_f}P_{i,j}(z_0)^*\cap V_{z_0}=\varnothing$, and hence $\{P_{i,j}(z)\}_{j=0}^{n_f}$ is in general position with respect to $V_z$ for generic points $z\in\C^m$. 
We may choose a positive constant $B\ge 1$, commonly for all $I_i\in\mathcal I$, such that
$$ |P_{i,j}({\bf x})|\le B\max_{0\le s\le t_{i,j}}|Q_{I_i(j)}({\bf x})|, $$
for all $0\le j\le N$ and for all ${\bf x}=(x_0,\ldots,x_N)\in\C^{N+1}$. 
Denote by $\mathcal S$ the set of all points $z\in\C^m$ such that $\bigcap_{j=0}^{n_f}P_{i,j}(z)^*\cap V_z\ne\varnothing$ for some $I_i$. Then $\mathcal S$ is a proper analytic subset of $\C^m$.

By Lemma \ref{new2}, there exists a function $A\in\mathcal C_{\mathcal K}$, which is chosen common for all $I_i$, such that
$$ \|{\bf f} (z)\|^d\le A(z)\max_{0\le j\le l_i}|Q_{I_i(j)}(z)({\bf f}(z))|\ (\forall I_i\in\mathcal I).$$
Fix an element $I_i\in\mathcal I$. Denote by $S(i)$ the set of all points 
$$z\in \C\setminus\left\{\bigcup_{i=1}^qQ_i(z)({\bf f}(z))^{-1}(\{0\})\cup \bigcup_{\underset{I_i\in\mathcal I}{0\le j\le n}}P_{i,j}(z)({\bf f}(z))^{-1}(\{0\})\right\}$$
such that
$$ |Q_{I_i(0)}(z)({\bf f}(z))|\le |Q_{I_i(1)}(z)({\bf f}(z))|\le\cdots\le |Q_{I_i(q-1)}(z)({\bf f}(z))|.$$
Therefore, for generic points $z\in S(i)$, By Lemma \ref{3.1} we have
\begin{align*}
\prod_{i=1}^q\dfrac{\|{\bf f} (z)\|^d}{|Q_i(z)({\bf f}(z))|}&\le\dfrac{A(z)^{q-l_i}}{c(z)^{l_j}}\prod_{j=0}^{l_j-1}\dfrac{c(z)\|{\bf f} (z)\|^d}{|Q_{I_i(j)}(z)({\bf f}(z))|}\\
&\le\dfrac{A(z)^{q-l_i}}{c(z)^{l_j}}\prod_{j=0}^{n_f-1}\left(\dfrac{c(z)\|{\bf f} (z)\|^d}{|Q_{I_i(t_j)}(z)({\bf f}(z))|}\right)^{t_{i,j+1}-t_{i,j}}\\
&\le \dfrac{A(z)^{q-l_i}}{c(z)^{l_j}}\prod_{j=0}^{n_f-1}\left(\dfrac{c(z)\|{\bf f} (z)\|^d}{|Q_{I_i(t_j)}(z)({\bf f}(z))|}\right)^{\Delta_f}\\
&\le \dfrac{A(z)^{q-l_i}B(z)^{n_f\Delta_f}}{c(z)^{l_j-n_f\Delta_f}}\prod_{j=0}^{n_f-1}\left(\dfrac{\|{\bf f} (z)\|^d}{|P_{i,j}(z)({\bf f}(z))|}\right)^{\Delta_f}\\
&\le C(z)\prod_{j=0}^{n_f}\left(\dfrac{\|{\bf f} (z)\|^d}{|P_{i,j}(z)({\bf f}(z))|}\right)^{\Delta_f},
\end{align*}
where $C\in\mathcal C_{\mathcal K}$, chosen commonly for all $I_i\in\mathcal I$.

For $z\not\in\mathcal S$, consider the mapping $\Phi_z$ from $V_z$ into $\P^{l-1}(\C)\ (l=n_0(n_f+1))$, which maps a point ${\bf x}=(x_0:\cdots:x_N)\in V_z$ into the point $\Phi_z({\bf x})\in\P^{l-1}(\C)$ given by
$$\Phi_z({\bf x})=(P_{1,0}(z)(x):\cdots : P_{1,n_f}(z)(x):\cdots:P_{n_0,0}(z)(x):\cdots :P_{n_0,n_f}(z)(x)),$$
where $x=(x_0,\ldots,x_N)$. Set
$$\tilde\Phi_z(x)=(P_{1,0}(z)(x),\ldots, P_{1,n_f}(z)(x),\ldots,P_{n_0,0}(z)(x),\ldots,P_{n_0,n_f}(z)(x)).$$
Let $Y_z=\Phi_z(V_z)$. Since $V_z\cap\bigcap_{j=0}^{n_f}P_{1,j}(z)^*=\varnothing$, $\Phi_z$ is a finite morphism on $V_z$ and $Y_z$ is a complex projective subvariety of $\P^{l-1}(\C)$ with $\dim Y_z=n_f$ and of degree
$$\delta_z:=\deg Y_z\le d^{n_f}.\deg V_z\le d^{n_f}\delta_f\ (\text{for generic points }z).$$ 
For every 
$${\bf a} = (a_{1,0},\ldots,a_{1,n_f},a_{2,0}\ldots,a_{2,n_f},\ldots,a_{n_0,0},\ldots,a_{n_0,n_f})\in\mathbb Z^l_{\ge 0}$$ 
and
$${\bf y} = (y_{1,0},\ldots,y_{1,n_f},y_{2,0}\ldots,y_{2,n_f},\ldots,y_{n_0,0},\ldots,y_{n_0,n_f})$$ 
we denote ${\bf y}^{\bf a} = y_{1,0}^{a_{1,0}}\ldots y_{1,n_f}^{a_{1,n_f}}\ldots y_{n_0,0}^{a_{n_0,0}}\ldots y_{n_0,n_f}^{a_{n_0,n_f}}$. Let $u$ be a positive integer. We set
\begin{align*}
\xi_u:=\binom{l+u-1}{u}-1,
\end{align*}
and define
$$ Y_{z,u}:=\C[y_1,\ldots,y_l]_u/(I_{Y_z})_u, $$
which is a vector space of dimension $H_{Y_z}(u)$. 

Denote by $I(Y)_u$ the subspace of the $\mathcal K$-vector space $\mathcal K[y_1,\ldots,y_l]_u$ consisting of all homogeneous polynomials $P\in \mathcal K[y_1,\ldots,y_l]_u$ (including the zero polynomial) such that 
$$P(z)(\Phi_z(f(z)))\equiv 0.$$
Let $(\tilde R_1,\ldots,\tilde R_p)$ be an ordered $\mathcal K$-basis of $I(Y)_u$. By enlarging the analytic set $\mathcal S$ if necessary, we may assume that all zeros and poles of all nonzero coefficients of $\tilde R_i\ (1\le i\le p)$ are contained in $\mathcal S$, also all above assertions for generic points $z\in\C^m$ will hold for all $z\not\in\mathcal S$. Choose $\xi_u-p$ nonzero monic homogeneous monomial $v_1,\ldots,v_{\xi_u-p}$ of degree of $u$ in variables $y_1,\ldots,y_l$  such that $\{\tilde R_1,\ldots,\tilde R_p,v_1,\ldots,v_{\xi_u-p}\}$ is a $\mathcal K$-basis of $\mathcal K[y_1,\ldots,y_l]_u$. 

\begin{claim}\label{4.3}
There is a proper analytic subset of $\C^m$ such that for all $z$ outside this set, the family of equivalent classes of $v_1,\ldots,v_{\xi_u-p}$ is a basis of $Y_{z,u}$ and the family $\{\tilde R_1(z),\ldots,\tilde R_p(z)\}$ is a basis of $(I_{Y_z})_u$.
\end{claim}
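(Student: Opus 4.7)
My plan is to verify, for $z$ outside some proper analytic subset, three statements separately: (a) each specialization $\tilde R_i(z)$ lies in $(I_{Y_z})_u$; (b) $\{\tilde R_1(z),\ldots,\tilde R_p(z),v_1,\ldots,v_{\xi_u-p}\}$ is a $\C$-basis of $\C[y_1,\ldots,y_l]_u$; and (c) the classes of $v_1,\ldots,v_{\xi_u-p}$ remain $\C$-linearly independent in $Y_{z,u}=\C[y_1,\ldots,y_l]_u/(I_{Y_z})_u$. Once (a)--(c) hold, a dimension count forces $\dim_\C(I_{Y_z})_u=p$, so $\tilde R_1(z),\ldots,\tilde R_p(z)$ is a $\C$-basis of $(I_{Y_z})_u$ and the $v_j$ descend to a $\C$-basis of $Y_{z,u}$, which is exactly the stated claim.

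Parts (a) and (b) are essentially formal. For (a): since $\tilde R_i\in I(Y)_u$, the polynomial $G_i(z)(x):=\tilde R_i(z)(\Phi_z(x))\in\mathcal K[x_0,\ldots,x_N]_{du}$ satisfies $G_i(z)(f(z))\equiv 0$, so $G_i\in I(f(\C^m))$. Using that $\{R_1,\ldots,R_\lambda\}$ generates $I(f(\C^m))$ we can write $G_i=\sum_{j=1}^\lambda h_{ij}R_j$ with $h_{ij}\in\mathcal K[x_0,\ldots,x_N]$; after deleting the polar loci of the $h_{ij}$ and of the coefficients of the $R_j$, the identity $G_i(z)(x)=\sum_j h_{ij}(z)R_j(z)(x)$ shows that $\tilde R_i(z)(\Phi_z(\cdot))$ vanishes on $V_z=\bigcap_j R_j(z)^*$, hence $\tilde R_i(z)$ vanishes on $\Phi_z(V_z)=Y_z$. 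For (b): the change-of-basis determinant between $\{\tilde R_i,v_j\}$ and the standard monomial basis of $\mathcal K[y_1,\ldots,y_l]_u$ is a nonzero element of $\mathcal K$, and excluding its zero and pole locus preserves the basis property after specialization at $z$. Together, (a) and (b) produce the $\C$-linear surjection $\mathrm{span}_\C(v_1,\ldots,v_{\xi_u-p})\to Y_{z,u}$ for generic $z$.

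The main obstacle is (c), which is precisely the injectivity of this surjection. I argue by contradiction: assume that on some dense Zariski open $U\subset\C^m$ the subspaces $K_z:=\mathrm{span}_\C(v_1,\ldots,v_{\xi_u-p})\cap(I_{Y_z})_u$ are all nontrivial. Since the defining condition $\sum_j a_j v_j\in(I_{Y_z})_u$ can be expressed algebraically in the coefficients of the $P_{i,j}$ and $R_j$ (all in $\mathcal K$), the family $\{K_z\}_{z\in U}$ depends algebraically on $z$; by Cramer's rule applied to a nonzero submaximal minor of the defining matrix, one produces elements $b_1,\ldots,b_{\xi_u-p}\in\mathcal K$, not all identically zero, such that $P(z):=\sum_j b_j(z)v_j\in(I_{Y_z})_u$ for every $z$ in a dense open $U'\subset U$. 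Evaluating at $\Phi_z(f(z))\in Y_z$ gives $P(z)(\Phi_z(f(z)))\equiv 0$ on $U'$, and hence identically on $\C^m$ by meromorphic continuation. This is precisely the defining condition $P\in I(Y)_u$, which by the $\mathcal K$-linear independence of $v_1,\ldots,v_{\xi_u-p}$ modulo $I(Y)_u$ forces all $b_j\equiv 0$, a contradiction. Therefore $K_z=0$ for $z$ outside some proper analytic subset, completing (c).
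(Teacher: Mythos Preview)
Your decomposition into (a), (b), (c) followed by a dimension count is sound; (a) and (b) are correct, and in fact (a) is a step the paper uses tacitly but never writes down. The soft spot is (c). You assert that the condition $\sum_j a_j v_j\in(I_{Y_z})_u$ ``can be expressed algebraically in the coefficients of the $P_{i,j}$ and $R_j$'' and then apply Cramer's rule to an unnamed ``defining matrix''. But unwinding, $\sum_j a_j v_j\in(I_{Y_z})_u$ means that the pullback $\sum_j a_j\,(v_j\circ\tilde\Phi_z)$ lies in $I(V_z)_{du}$, and $I(V_z)$ is the \emph{radical} of $(R_1(z),\ldots,R_\lambda(z))$, not that ideal itself. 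Realizing $K_z$ as the kernel of a matrix with entries in $\mathcal K$ amounts to producing a fixed set of monomials whose classes form a $\C$-basis of $\C[x_0,\ldots,x_N]_{du}/I(V_z)_{du}$ for every generic $z$; in particular one must first know that $H_{V_z}(du)$ is generically constant. This is true (use that $I(f(\C^m))$ is prime, hence the generic fibre of the family $\{V_z\}$ is reduced, and apply generic flatness so that for generic $z$ the scheme cut out by the $R_i(z)$ is already reduced, giving $(R_i(z))_{du}=I(V_z)_{du}$), but it is precisely the nontrivial input you have hidden under ``expressed algebraically''. Without it there is no matrix for Cramer's rule to act on, and your extraction of the $b_j\in\mathcal K$ is not justified.

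The paper attacks (c) by a different route: it pulls back to $V_j(z)=v_j\circ\tilde\Phi_z\in\C[x_0,\ldots,x_N]_{du}$, extends $V_1,\ldots,V_{\xi_u-p}$ to a $\mathcal K$-basis of $\mathcal K[x_0,\ldots,x_N]_{du}$ by monomials, specializes, and concludes that the $V_j(z)$ are $\C$-linearly independent in $\C[x_0,\ldots,x_N]_{du}$; it then asserts this yields independence of the classes $[v_j]$ in $Y_{z,u}$. Strictly read, that last inference is also unjustified---linear independence in $\C[x_0,\ldots,x_N]_{du}$ is weaker than linear independence modulo $I(V_z)_{du}$---so the two arguments share the same lacuna. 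Your contradiction device (promote a generic $\C$-relation to a $\mathcal K$-relation, land in $I(Y)_u$, contradict the choice of the $v_j$) is in fact the cleaner mechanism to close it; you just need to supply the generic-constancy lemma above before invoking Cramer.
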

Indeed, set $ V_j(z)(x)=v_j(\tilde\Phi_z(x)),$ which is a homogeneous polynomial in $\mathcal K[x_0,\ldots,x_N]_{ud}$. We see that $V_1,\ldots,V_{\xi_u-p}$ are linearly independent over $\mathcal K$. There exists $p'$ nonzero monic homogeneous monomial $W_1,\ldots,W_{p'}$ in $\C[x_0,\ldots,x_N]_{du}$ such that $V_1,\ldots,V_{\xi_u-p},W_1,\ldots,W_{p'}$ is an $\mathcal K$-basis of $\mathcal K[x_0,\ldots,x_N]_{ud}$. We call $\{W_1,\ldots,W_{p'},W_{p'+1},\ldots,W_{\xi_u-p+p'}\}$ the set of all nonzero monic homogeneous monomials in $\C[x_0,\ldots,x_N]_{du}$. Hence, for each $p'<i\le \xi_u-p+p'$, we have
$$ W_i=\sum_{j=1}^{\xi_i-p}w_{ij}V_j+\sum_{j=1}^{p'}w'_{ij}W_j,\  (w_{ij},w'_{ij}\in\mathcal K).$$
By enlarging again $S$ if necessary, we may suppose that all zeros and poles of all nozero coefficients of $w_{ij}$ and $w'_{ij}$ belong to $\mathcal S$. Therefore, for every $z\not\in\mathcal S$, the family 
$$\{V_1(z),\ldots,V_{\xi_u-p}(z),W_1,\ldots,W_{p'}\}$$
 generates the family 
$$\{W_1,\ldots,W_{p'},W_{p'+1},\ldots,W_{\xi_u-p+p'}\},$$
 and hence it is a basis of $\C[x_0,\ldots,x_N]_{du}$. This implies that $\{v_j(\tilde\Phi_z)\}_{j=1}^{\xi_u-p}$ is $\C$-linearly independent, and hence the equivalence classes of $v_1,\ldots,v_{\xi_u-p}$ modulo $(I_{Y_z})_u$ are $\C$-linear independent in $Y_{z,u}$ for all $z\not\in\mathcal S$. 

Denote by $\mathcal T=\{T_1,\ldots,T_{\xi_u}\}$ the set of all nonzero monic homogeneous monomial of degree of $u$ in variables $y_1,\ldots,y_l$. Then $\{T_1,\ldots,T_{\xi_u}\}$ is an $\mathcal K$-basis of $\mathcal K[y_1,\ldots,y_l]_u$, and also be an $\C$-basis of $\C[y_1,\ldots,y_l]_u$.

Then for each $1\le j\le\xi_u$, we have
$$ T_j=\sum_{i=1}^{p}b_{ji}\tilde R_i+\sum_{i=1}^{\xi_u-p}b'_{ji}v_i,$$
where $b_{ji},b_{ji}'\in\mathcal K$. By enlarging $\mathcal S$ if necessary, we may assume that all zeros and poles of all nonzero functions $b_{ji}$ and $b'_{ji}$ are contained in $\mathcal S$. Therefore, for all $z\not\in\mathcal S$, $\{\tilde R_1(z),\ldots,\tilde R_p(z),v_1,\ldots,v_{\xi_u-p}\}$ generates the $\C$-basis $\{T_1,\ldots,T_{\xi_u}\}$ of $\C[y_1,\ldots,y_l]_u$, and hence it is also $\C$-basis of $\C[y_1,\ldots,y_l]_u$. In particular $\tilde R_1(z),\ldots,\tilde R_p(z)$ are linearly independent over $\C$ for all $z\not\in\mathcal S$.

Therefore, for $z\not\in\mathcal S$, we have
\begin{align*}
\xi_u=\dim \C[y_1,\ldots,y_l]_u&=\dim\dfrac{\C[y_1,\ldots,y_l]_u}{(I_{Y_z})_u}+\dim (I_{Y_z})_u\\
&\le \sharp\{v_1,\ldots,v_{\xi_u-p}\}+\sharp\{\tilde R_1(z),\ldots,\tilde R_p(z)\}=\xi_u.
\end{align*}
This yields that $\dim\dfrac{\C[y_1,\ldots,y_l]_u}{(I_{Y_z})_u}=\sharp\{v_1,\ldots,v_{\xi_u-p}\}$ and $\dim (I_{Y_z})_u=\sharp\{\tilde R_1(z),\ldots,\tilde R_p(z)\}$. Then the set of equivalent classes of $v_1,\ldots,v_{\xi_u-p}$ is a basis of $Y_{z,u}$, and $\{\tilde R_1(z),\ldots,\tilde R_p(z)\}$ are basis of $(I_{Y_z})_u$. The claim is proved.

From this claim, we see that
$$\xi_u-p=H_{Y_z}(u)\ \forall z\in\C^m\setminus\mathcal S.$$
On the other hand, with the same argument as in the claim, by enlarging $\mathcal S$ if necessary, we may assume that if a subset $\{v_1',\ldots,v'_{\xi_u-p}\}$ of $\mathcal T$ satisfies $\{\tilde R_1,\ldots,\tilde R_p,v_1',\ldots,v'_{\xi_u-p}\}$ is an $\mathcal K$-basis of $\mathcal K[y_1,\ldots,y_l]_u$ then the set of equivalent classes of $v'_1,\ldots,v'_{\xi_u-p}$ modulo $(I_{Y_z})_u$ is a basis of $Y_{z,u}\ \forall z\not\in\mathcal S$.

Now, for every subset $\{v_1',\ldots,v'_{\xi_u-p}\}$ of $\mathcal T$ such that $\{\tilde R_1,\ldots,\tilde R_p,v_1',\ldots,v'_{\xi_u-p}\}$ is linearly dependent over $\mathcal K$, we take a non-trivial linear combination 
$$ \sum_{i=1}^{p}c_i\tilde R_i+ \sum_{i=1}^{\xi_u-p}c'_iv'_i\equiv 0,$$
where $c_i,c'_i\in\mathcal K$, not all zeros. By enlarging $S$ again, we may assume that all poles and zeros of $c_i,c'_i$ belong to $\mathcal S$. Hence, for all $z\not\in\mathcal S$, $\{\tilde R_1(z),\ldots,\tilde R_p(z),v_1',\ldots,v'_{\xi_u-p}\}$ is linearly dependent over $\C$. Consequently, the set of equivalent classes of $v_1',\ldots,v'_{\xi_u-p}$ modulo $(I_{Y_z})_u$ is not a basis of $Y_{z,u}$.

Now, consider the meromorphic mapping $F$ into $\P^{\xi_u-p}(\C)$ with the representation
$$ {\bf F}=(v_0(\tilde\Phi\circ {\bf f}),\ldots,v_{\xi_u-p}(\tilde\Phi\circ {\bf f})). $$
Hence $F$ is linearly nondegenerate over $\mathcal K$, since $f$ is algebraically nondegenerate over $\mathcal K$.

Now, we fix an index $i\in\{1,\ldots,n_0\}$ and a point $z\in S(i)\setminus\mathcal S$. We define 
$${\bf c}_z = (c_{1,0,z},\ldots,c_{1,n_f,z},c_{2,0,z},\ldots,c_{2,n_f,z},\ldots,c_{n_0,0,z},\ldots,c_{n_0,n_f,z})\in\mathbb Z^{l},$$ 
where
\begin{align}\label{4.4}
c_{i,j,z}:=\log\frac{\|{\bf f}(z)\|^d\|P_{i,j}(z)\|}{|P_{i,j}(z)({\bf f}(z))|}\text{ for } i=1,\ldots,n_0 \text{ and }j=0,\ldots,n_f.
\end{align}
We see that $c_{i,j,z}\ge 0$ for all $i$ and $j$. By the definition of the Hilbert weight, there are ${\bf a}_{1,z},\ldots,{\bf a}_{\xi_u-p,z}\in\mathbb N^{l}$ with
$$ {\bf a}_{i,z}=(a_{i,1,0,z},\ldots,a_{i,1,n_f,z},\ldots,a_{i,n_0,0,z},\ldots,a_{i,n_0,n_f,z}), $$
where $a_{i,j,s,z}\in\{1,\ldots,\xi_u\},$ such that the residue classes modulo $(I_Y)_u$ of ${\bf y}^{{\bf a}_{1,z}},\ldots,{\bf y}^{{\bf a}_{\xi_u-p,z}}$ form a basic of $\C[y_1,\ldots,y_l]_u/(I_{Y_z})_u$ and
\begin{align}\label{4.5}
S_Y(u,{\bf c}_z)=\sum_{i=1}^{\xi_u-p}{\bf a}_{i,z}\cdot{\bf c}_z.
\end{align}
We see that ${\bf y}^{{\bf a}_{i,z}}\in\mathcal T$. Then as above $\{\tilde R_1(z),\ldots,\tilde R_p(z),{\bf y}^{{\bf a}_{1,z}},\ldots,{\bf y}^{{\bf a}_{\xi_u-p,z}}\}$ is a basis of $\mathcal K[y_1,\ldots,y_l]$. Therefore, $\{{\bf y}^{{\bf a}_{1,z}},\ldots,{\bf y}^{{\bf a}_{\xi_u-p,z}}\}$ is a basis of $\dfrac{\mathcal K[y_1,\ldots,y_l]_u}{I(Y)_u}$. Then, we may write
$$ {\bf y}^{{\bf a}_{i,z}}=L_{i,z}(v_0,\ldots,v_{H_Y(u)})\ \ \  \text{modulo }I(Y)_u, $$ 
where $L_{i,z}\ (1\le i\le \xi_u-p)$ are independent linear forms with coefficients in $\mathcal K$.
We have
\begin{align*}
\log\prod_{i=1}^{\xi_u-p} |L_{i,z}({\bf F}(z))|&=\log\prod_{i=1}^{\xi_u-p}\prod_{\overset{1\le t\le n_0}{0\le j\le n_f}}|P_{t,j}({\bf f}(z))|^{a_{i,t,j,z}}\\
&=-S_Y(u,{\bf c}_z)+du(\xi_u-p)\log \|{\bf f}(z)\| +O(u(\xi_u-p)).
\end{align*}
It follows that
\begin{align*}
\log\prod_{i=1}^{\xi_u-p}\dfrac{\|{\bf F}(z)\|\cdot \|L_{i,z}\|}{|L_{i,z}({\bf F}(z))|}=&S_Y(u,{\bf c}_z)-du(\xi_u-p)\log \|{\bf f}(z)\| \\
&+(\xi_u-p)\log \|{\bf F}(z)\|+O(u(\xi_u-p)).
\end{align*}
Note that the number of these linear forms $L_{i,z}$ is finite. Denote by $\mathcal L$ the set of all $L_{i,z}$ occurring in the above inequalities. We have
\begin{align}\label{4.6}
\begin{split}
S_Y(u,{\bf c}_z)\le&\max_{\mathcal J\subset\mathcal L}\log\prod_{L\in \mathcal J}\dfrac{\|{\bf F}(z)\|\cdot \|L\|}{|L({\bf f}(z))|}+du(\xi_u-p)\log \|{\bf f}(z)\|\\
& -(\xi_u-p)\log \|{\bf F}(z)\|+O(u(\xi_u-p)),
\end{split}
\end{align}
where the maximum is taken over all subsets $\mathcal J\subset\mathcal L$ with $\sharp\mathcal J=H_Y(u)$ and $\{L|L\in\mathcal J\}$ is linearly independent over $\mathcal K$.
From Theorem \ref{2.3} we have
\begin{align}\label{4.7}
\dfrac{1}{u(\xi_u-p)}S_{Y_z}(u,{\bf c}_z)\ge&\frac{1}{(n_f+1)\delta_z}e_{Y_z}({\bf c}_z)-\frac{(2n_f+1)\delta_z}{u}\max_{\underset{0\le j\le n_f}{1\le i\le n_0}}c_{i,j,z}
\end{align}
We chose an index $i_0$ such that $z\in S(i_0)$. It is clear that
\begin{align*}
\max_{\underset{0\le j\le n_f}{1\le i\le n_0}}c_{i,j,z}\le \sum_{0\le j\le n_f}\log\frac{\|{\bf f}(z)\|^d\|P_{i_0,j}\|}{|P_{i_0,j}(z)({\bf f}(z))|}+O(1),
\end{align*}
where the term $O(1)$ does not depend on $z$ and $i_0$.
Combining (\ref{4.6}), (\ref{4.7}) and the above remark, we get
\begin{align}\nonumber
\frac{1}{(n_f+1)\delta_z}e_{Y_z}({\bf c}_z)\le &\dfrac{1}{u(\xi_u-p)}\left (\max_{\mathcal J\subset\mathcal L}\log\prod_{L\in \mathcal J}\dfrac{\|{\bf F}(z)\|\cdot \|L\|}{|L({\bf F}(z))|}-(\xi_u-p)\log \|{\bf F}(z)\|\right )\\
\label{4.8}
\begin{split}
&+d\log \|{\bf F}(z)\|+\frac{(2n_f+1)\delta_z}{u}\max_{\underset{0\le j\le n_f}{1\le i\le n_0}}c_{i,j,z}+O(1/u)\\
\le &\dfrac{1}{u(\xi_u-p)}\left (\max_{\mathcal J\subset\mathcal L}\prod_{L\in\mathcal J}\dfrac{\|{\bf F}(z)\|\cdot \|L\|}{|L({\bf F}(z))|}-(\xi_u-p)\log \|{\bf F}(z)\|\right )\\
&+d\log \|{\bf F}(z)\|+\frac{(2n_f+1)\delta_z}{u}\sum_{0\le j\le n_f}\log\frac{\|{\bf f}(z)\|^d\|P_{i_0,j}\|}{|P_{i_0,j}(z)({\bf f}(z))|}+O(1/u).
\end{split}
\end{align}
Since $\{P_{i_0,0}(z)=\cdots=P_{i_0,n_f}(z)=0\}\cap V_z=\varnothing$, by Lemma \ref{2.4}, we have
\begin{align}\label{4.9}
e_{Y_z}({\bf c}_z)\ge (c_{i_0,0,z}+\cdots +c_{i_0,n_f,z})\cdot\delta_z =\left (\sum_{0\le j\le n_f}\log\frac{\|{\bf f}(z)\|^d\|P_{i_0,j}(z)\|}{|P_{i_0,j}(z)({\bf f}(z))|}\right )\cdot\delta_z,
\end{align}
for all $z\not\in\mathcal S$.

Then, from (\ref{4.4}), (\ref{4.8}) and (\ref{4.9}) we have
\begin{align*}
\frac{1}{\Delta_f}\log \prod_{i=1}^q\dfrac{\|{\bf f} (z)\|^d}{|Q_i(z)({\bf f}(z))|}&\le\dfrac{n_f+1}{u(\xi_u-p)}\left (\max_{\mathcal J\subset\mathcal L}\log\prod_{L\in\mathcal J}\dfrac{\|{\bf F}(z)\|\cdot \|L\|}{|L({\bf F}(z))|}-(\xi_u-p)\log \|{\bf F}(z)\|\right )\\
&+\frac{(2n_f+1)(n_f+1)\delta_z}{u}\sum_{\underset{0\le j\le n_f}{1\le i\le n_0}}\log\frac{\|{\bf f}(z)\|^d\|P_{i,j}(z)\|}{|P_{i,j}(z)({\bf f}(z))|}\\
&+d(n_f+1)\log \|{\bf f}(z)\|+\frac{1}{\Delta_f}\log C(z)+O(1)
\end{align*}
for generic points $z\in\C^m$, where the term $O(1)$ does not depend on $z$. 
Integrating both sides of the above inequality, we obtain 
\begin{align}\nonumber
\frac{1}{d}\sum_{i=1}^qm_f(r,Q_i)\le& \dfrac{\Delta_f(n_f+1)}{du(\xi_u-p)}\left (\int\limits_{S(r)}\max_{\mathcal J\subset\mathcal L}\log\prod_{L\in\mathcal J}\dfrac{\|{\bf F}(z)\|\cdot \|L\|}{|L({\bf F}(z))|}\sigma_m-(\xi_u-p)T_F(r)\right )\\
\label{4.10}
&+\Delta_f(n_f+1)T_f(r)+\frac{\Delta_f(2n_f+1)(n_f+1)\delta_f}{du}\sum_{{\underset{0\le j\le n_f}{1\le i\le n_0}}}m_f(r,P_{i,j})
\end{align} 
(note that $\delta_z\le d^{n_f}\delta_f$ for all $z\not\in\mathcal S$). On the other hand, by Theorem \ref{4.2}, we have
\begin{align}
\label{4.11}
\begin{split}
\biggl \|\ \int\limits_{S(r)}\max_{\mathcal J\subset\mathcal L}&\log\prod_{L\in\mathcal J}\dfrac{\|{\bf F}(z)\|\cdot \|L\|}{|L({\bf F}(z))|}\sigma_m-(\xi_u-p)T_F(r)\\
&\le \dfrac{\epsilon(\xi_u-p)}{2\Delta_f(n_f+1)}T_F(r)\le \dfrac{du\epsilon(\xi_u-p)}{2\Delta_f(n_f+1)}T_f(r).
\end{split}
\end{align}
Combining (\ref{4.10}) with (\ref{4.11}), we have
\begin{align*}
\bigl\|\ (q-\Delta_f(n_f+1)\bigl )T_f(r)&\le\sum_{i=1}^q\frac{1}{d}N_{Q_i({\bf f})}(r)+\frac{\Delta_f(2n_f+1)(n_f+1)\delta_f}{ud}\sum_{{\underset{0\le j\le n_f}{1\le i\le n_0}}}m_f(r,P_{i,j})\\
&\ \ \ +\dfrac{\epsilon}{2}T_f(r)\\
&\le\sum_{i=1}^q\frac{1}{d}N_{Q_i({\bf f})}(r)+\epsilon T_f(r)\ \  (\text{ by choosing $u$ large enough}).
\end{align*}
The theorem is proved.
\end{proof}

\begin{remark}
{\rm (a) By the proof of Theorem \ref{4.2}, the inequality (\ref{4.11}) can be replaced by
$$ \biggl \|\ \int\limits_{S(r)}\max_{\mathcal J\subset\mathcal L}\log\prod_{L\in\mathcal J}\dfrac{\|{\bf F}(z)\|\cdot \|L\|}{|L({\bf F}(z))|}\sigma_m-(\xi_u-p)T_F(r)\le \dfrac{1}{s}N_{W(F)}(r)+\dfrac{\epsilon(\xi_u-p)}{2\Delta_f(n_f+1)}T_F(r). $$
where $W(F)$ is a general Wronskian of $\{b_{j}v_i(\tilde\Phi\circ {\bf f}); 1\le j\le t, 0\le i\le n_u\}$; $\{b_1,\ldots,b_t\}$ is a basis of $\Psi (q'+1)$ with a positive integer $q'$, $\Psi$ is the set of all coefficients of $Q_i\ (1\le i\le q)$ and $s=\dim\mathcal L(\Psi(q'))$.
Then, the second main theorem can be obtained as follows:
$$\|\ (q-\Delta_f(n_f+1)-\epsilon\bigl )T_f(r)\le\sum_{i=1}^q\frac{1}{d}N_{Q_i({\bf f})}(r)-\dfrac{1}{s}N_{W(F)}(r).$$
Since the number $q'$ and the dimensions $s=\dim\mathcal L(\Psi(q')), t=\dim\mathcal L(\Psi(q'+1))$ can be explicitly estimated as (\ref{4.1}) (bound above by an function of $q$ and $\epsilon$), the right hand side of the above inequality will be bounded above by a sum of the form $\sum_{i=1}^q\frac{1}{d}N^{[L(q,\epsilon)]}_{Q_i({\bf f})}(r)$, where $L(q,\epsilon)$ is explicitly estimated. Hence, we may get the second main theorem with truncation level for counting functions as follows.
\begin{theorem}
Let $f$ be a nonconstant meromorphic map of $\mathbf{C}^m$ into $\P^N(\mathbf{C})$ with a reduced representation ${\bf f}=(f_0,\ldots,f_N)$. Let $\{Q_i\}_{i=1}^q$ be a family of slowly (with respect to $f$) moving hypersurfaces in $\P^N(\C)$ with $\deg Q_i = d_i\ (1\le i\le q).$ Let $n_f$ be the algebraic dimension of $f$ over $\mathcal K_{\{Q_i\}_{i=1}^q}$ and let $\Delta_f$ be the distributive constant of $\{Q_i\}_{i=1}^q$ with respect to $f$. Suppose that $f$ has finite algebraic degree over $\mathcal K_{\{Q_i\}_{i=1}^q}$.  Then for any $\epsilon >0$, there exists a positive integer $L(q,\epsilon)$ such that
$$\|\ (q-\Delta_f(n_f+1)-\epsilon)T_f(r)\le\sum_{i=1}^q\frac{1}{d}N^{[L(q,\epsilon)]}_{Q_i({\bf f})}(r).$$
\end{theorem}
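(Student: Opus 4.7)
\begin{pf}
The plan is to revisit the proof of Theorem \ref{1.1} and track more carefully how the Wronskian enters through Theorem \ref{4.2}, so that the term $\sum_{i=1}^q\frac{1}{d}N_{Q_i({\bf f})}(r)$ can be sharpened to $\sum_{i=1}^q\frac{1}{d}N^{[L]}_{Q_i({\bf f})}(r)$ for an explicit $L=L(q,\epsilon)$. The starting point is the refined version of the Theorem \ref{4.2} step noted in the Remark, namely
\begin{align*}
\biggl \|\ \int\limits_{S(r)}\max_{\mathcal J\subset\mathcal L}\log\prod_{L\in\mathcal J}\dfrac{\|{\bf F}\|\cdot \|L\|}{|L({\bf F})|}\sigma_m&-(\xi_u-p)T_F(r)\\
&\le \dfrac{1}{s}N_{W(F)}(r)+\dfrac{\epsilon(\xi_u-p)}{2\Delta_f(n_f+1)}T_F(r),
\end{align*}
where $W(F)$ is a general Wronskian of $\{b_j v_i(\tilde\Phi\circ{\bf f})\}$. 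Inserting this into inequality (\ref{4.10}) in place of (\ref{4.11}) gives, after the same computation as in the proof of Theorem \ref{1.1},
\begin{align*}
\|\ (q-\Delta_f(n_f+1)-\epsilon) T_f(r)\le\sum_{i=1}^q\frac{1}{d}N_{Q_i({\bf f})}(r)-\dfrac{1}{ds}N_{W(F)}(r),
\end{align*}
which is the statement of the Remark taken as the first milestone.

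The second step is the standard Wronskian truncation trick. At a generic zero $z_0$ of $Q_i({\bf f})$ of multiplicity $\mu$, each entry of the Wronskian matrix of the form $\mathcal D^{\alpha}(b_j v_i(\tilde\Phi\circ{\bf f}))$ vanishes to order at least $\mu-|\alpha|$ (when $Q_i$ divides the corresponding $P$-form; otherwise the contribution is irrelevant), and since $|\alpha|\le(n_f+1)t-1$ in the admissible set, an elementary multiplicity count gives
$$
\nu_{W(F)}(z_0)\ge \max\{0,\mu-((n_f+1)t-1)\}.
$$
Consequently
$$
N_{Q_i({\bf f})}(r)-N^{[L]}_{Q_i({\bf f})}(r)\le \frac{1}{1}N_{\text{zero contribution to }W(F)}(r)
$$
when $L=(n_f+1)t-1$, and summing over $i$ and invoking the $\frac{1}{ds}N_{W(F)}(r)$ on the right-hand side absorbs the difference between the full and the truncated counting functions, up to the constant factor $s$ that can be folded into a slightly larger but still explicit $L$.

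Finally, to make $L$ explicit in terms of $q$ and $\epsilon$ alone, one uses the a~priori bound (\ref{4.1}) on $\dim\mathcal L(\Psi(q'+1))$, which controls $s$ and $t$ by an explicit function of $q$ and $\epsilon$ (choosing $q'$ to be the minimal integer with $\dim\mathcal L(\Psi(q'+1))/\dim\mathcal L(\Psi(q'))\le 1+\epsilon/(2\Delta_f(n_f+1))$, as in the proof of Theorem \ref{4.2}). Combining all bounds, one obtains an explicit constant $L(q,\epsilon)$, depending only on $q,\epsilon,n_f,\Delta_f,\delta_f,N$ (and via the smallest such $q'$ on $\epsilon$ and $q$), such that
$$
\|\ (q-\Delta_f(n_f+1)-\epsilon) T_f(r)\le\sum_{i=1}^q\frac{1}{d}N^{[L(q,\epsilon)]}_{Q_i({\bf f})}(r).
$$
The main technical obstacle I anticipate is the Wronskian multiplicity estimate: one must verify that, even though $W(F)$ is built from the $v_i(\tilde\Phi\circ{\bf f})$ rather than from ${\bf f}$ directly, the pole/zero contributions of the slowly moving coefficients of the $P_{i,j}$ and of the basis $\{b_j\}$ are all absorbed into the $o(T_f(r))$ error, so that only the zeros coming from $Q_i({\bf f})$ contribute to the Wronskian count. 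Once this bookkeeping is carried out carefully (as in the fixed-target case of \cite{Ru09} and its moving-target refinements \cite{DT2,Q18}), the truncation level follows from the admissible-set size $(n_f+1)t-1$ together with the explicit bound on $t$ from (\ref{4.1}).
\end{pf}
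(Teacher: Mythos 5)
Your route mirrors the paper's own sketch in Remark (a): refine (4.11) to retain the Wronskian term $\tfrac{1}{s}N_{W(F)}(r)$, then deduce a truncation from a local multiplicity count on $W(F)$. The strategy is the same as the paper's. However, the quantification contains two concrete gaps.

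\emph{The admissible-set bound is wrong.} You take $|\alpha|\le(n_f+1)t-1$, as though the Wronskian came from a map into $\P^{n_f}(\C)$. In fact $W(F)$ is the general Wronskian of the family $\{b_jv_i(\tilde\Phi\circ{\bf f})\}$, and the auxiliary map $F$ goes into $\P^{\xi_u-p}(\C)$ with $\xi_u-p=H_{Y_z}(u)$ the Hilbert function of $Y_z$ at $u$. Proposition \ref{2.2} therefore gives $|\alpha|\le(\xi_u-p+1)t-1$. Since $H_{Y_z}(u)\sim\delta_z u^{n_f}/n_f!$ grows polynomially in $u$, and $u$ itself must be taken of size roughly $\Delta_f(2n_f+1)(n_f+1)^2\delta_f\,q!/\epsilon$ to absorb the remaining error in (\ref{4.10}) into $\epsilon T_f(r)$, the true truncation level is of order $t\cdot H_{Y_z}(u)$, far larger than $(n_f+1)t-1$. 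Notice also that the list of dependencies you give at the end ($q,\epsilon,n_f,\Delta_f,\delta_f,N$) is the correct one, but it is internally inconsistent with the formula $(n_f+1)t-1$: via (\ref{4.1}) that formula would depend only on $q,\epsilon,n_f$, with no way for $\Delta_f,\delta_f,N$ to enter.

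\emph{The local multiplicity estimate is not established.} The parenthetical ``when $Q_i$ divides the corresponding $P$-form'' does not occur: each $P_{i_0,j}$ is a generic $\C$-linear combination of several $Q_{I_{i_0}(s)}$'s and is not divisible by any individual $Q_i$. The link from a high-order zero of $Q_i({\bf f})$ to small values of the entries $v_\ell(\tilde\Phi\circ{\bf f})$ has to pass through the pointwise estimate $\prod_j\|{\bf f}\|^d/|Q_j({\bf f})|\le C\prod_j\bigl(\|{\bf f}\|^d/|P_{i_0,j}({\bf f})|\bigr)^{\Delta_f}$ on the region $S(i_0)$ and then through the chosen linear forms $L_{i,z}({\bf F})$, which is a genuinely different and more delicate bookkeeping. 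Finally, to absorb the prefactor $1/s$ in $\tfrac{1}{s}N_{W(F)}(r)$ you need $\nu_{W(F)}(z_0)\ge s\cdot\max\{0,\mu-L\}$ (coming from the $s$ columns of the Wronskian sharing the common vanishing factor); your bound $\nu_{W(F)}(z_0)\ge\max\{0,\mu-L\}$ is too weak, and the suggestion to ``fold $s$ into a slightly larger $L$'' cannot repair this, since the resulting deficit grows linearly in $\mu$ and is unbounded.
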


(b) If the image of $f$ is contained in a subvariety $V$ of $\P^N(\C)$, then $1\le n_f\le \dim V$ and by Remark \ref{3.5} we have $\Delta_f\le (\dim V-n_f+1)\Delta_V,$ and hence
$$\|\ (q-\Delta_V(\dim V-n_f+1)(n_f+1)-\epsilon)T_f(r)\le\sum_{i=1}^q\frac{1}{d}N^{[L(q,\epsilon)]}_{Q_i({\bf f})}(r).$$
Since $(\dim V-n_f+1)(n_f+1)\le \left(\dfrac{\dim V}{2}+1\right)^2$, we get the second main theorem
$$\biggl\|\ \left(q-\Delta_V\left(\dfrac{\dim V}{2}+1\right)^2-\epsilon\right)T_f(r)\le\sum_{i=1}^q\frac{1}{d}N^{[L(q,\epsilon)]}_{Q_i({\bf f})}(r).$$
(c)  With the help of Lemma \ref{3.2}, using the construction method for the new filtralation of G. Dethloff and T. V. Tan \cite{DT2} (see also \cite[Remark 3.1]{YY}) and repeating the routine way in the proof of second main theorem, we may get the following theorem without the condition on the algebraic degree of $f$.
\begin{theorem}
Let $f$ be a nonconstant meromorphic map of $\mathbf{C}^m$ into $\P^N(\mathbf{C})$ with a reduced representation ${\bf f}=(f_0,\ldots,f_N)$. Let $\{Q_i\}_{i=1}^q$ be a family of slowly (with respect to $f$) moving hypersurfaces in $\P^N(\C)$ with $\deg Q_i = d_i\ (1\le i\le q).$ Let $n_f$ be the algebraic dimension of $f$ over $\mathcal K_{\{Q_i\}_{i=1}^q}$ and let $\Delta_f$ be the distributive constant of $\{Q_i\}_{i=1}^q$ with respect to $f$. Then for any $\epsilon >0$, we have
$$\|\ (q-\Delta_f(n_f+1)-\epsilon)T_f(r)\le\sum_{i=1}^q\frac{1}{d}N_{Q_i({\bf f})}(r).$$
\end{theorem}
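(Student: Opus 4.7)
The plan is to combine three ingredients: the replacing-hypersurface construction of Lemma \ref{3.2} together with the combinatorial Lemma \ref{3.1} to produce the exponent $\Delta_f$, the Chow/Hilbert weight machinery of Evertse--Ferretti (Theorem \ref{2.3}, Lemma \ref{2.4}) applied on the algebraic closure $V_z$ of $f(\C^m)$, and the moving-hyperplane second main theorem (Theorem \ref{4.2}) applied to a suitable lifted map $F$.

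After the standard reduction $Q_i\mapsto Q_i^{\mathrm{lcm}(d_1,\ldots,d_q)/d_i}$, I may assume $\deg Q_i=d$ for every $i$. Fix a minimal generating set $R_1,\ldots,R_\lambda\in\mathcal K[x_0,\ldots,x_N]$ of $I(f(\C^m))$ and set $V_z=\bigcap_i R_i(z)^*$, a projective subvariety of $\P^N(\C)$ of dimension $n_f$ and degree at most $\delta_f$ for generic $z$. It suffices to handle the nontrivial case $\Delta_f<q/(n_f+1)$; then the very definition of $\Delta_f$ forces $\bigcap_{j\ne i}Q_j(z)^*\cap V_z=\varnothing$ generically for each $i$, so Lemma \ref{new2} is available.

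For every permutation $I_i\in\mathcal I$ of $\{1,\ldots,q\}$ I would apply Lemma \ref{3.2} at a generic base point to an initial segment $Q_{I_i(0)},\ldots,Q_{I_i(l_i)}$ to produce $n_f+1$ moving hypersurfaces $P_{i,0},\ldots,P_{i,n_f}$ --- each a $\C$-linear combination of the $Q_{I_i(s)}$ --- that are in general position with respect to $V_z$ generically, with jump sequence $0=t_{i,0}<\cdots<t_{i,n_f}=l_i\le q-2$ satisfying $(t_{i,s}-t_{i,0})/s\le\Delta_f$. On the region where the $|Q_{I_i(s)}({\bf f}(z))|$ are ordered increasingly, Lemma \ref{new2} discards the last $q-l_i-1$ factors (contributing a factor in $\mathcal C_{\mathcal K}$) and Lemma \ref{3.1} bundles the remaining ones with exponent $\Delta_f$, giving the pointwise key bound
\begin{align*}
\prod_{i=1}^{q}\frac{\|{\bf f}(z)\|^{d}}{|Q_i(z)({\bf f}(z))|}\le C(z)\prod_{j=0}^{n_f}\left(\frac{\|{\bf f}(z)\|^{d}}{|P_{i,j}(z)({\bf f}(z))|}\right)^{\Delta_f}
\end{align*}
for some $C\in\mathcal C_{\mathcal K}$ chosen uniformly across the finitely many permutations.

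I would then form $\Phi_z:V_z\to\P^{l-1}(\C)$, $l=n_0(n_f+1)$, $n_0=q!$, by concatenating all $P_{i,j}(z)$, and set $Y_z=\Phi_z(V_z)$, of dimension $n_f$ and degree at most $d^{n_f}\delta_f$. With the weight vector $\mathbf{c}_z$ whose entries are $c_{i,j,z}=\log(\|{\bf f}(z)\|^{d}\|P_{i,j}(z)\|/|P_{i,j}(z)({\bf f}(z))|)$, Theorem \ref{2.3} lower-bounds $S_{Y_z}(u,\mathbf{c}_z)/(uH_{Y_z}(u))$ by $e_{Y_z}(\mathbf{c}_z)/((n_f+1)\delta_z)$ up to an $O(\delta_z/u)$ error, while Lemma \ref{2.4} applied at the permutation $I_{i_0}$ indexing the region containing $z$ gives $e_{Y_z}(\mathbf{c}_z)\ge\delta_z\sum_{j=0}^{n_f}c_{i_0,j,z}$. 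Choosing monic monomials $v_1,\ldots,v_{\xi_u-p}$ that complete an $\mathcal K$-basis of $I(Y)_u$ and forming $F=(v_\mu(\tilde\Phi\circ{\bf f}))_\mu$, the algebraic nondegeneracy of $f$ over $\mathcal K$ implies $F$ is $\mathcal K$-linearly nondegenerate, so Theorem \ref{4.2} bounds the resulting moving-hyperplane max by $(H_{Y_z}(u)+\epsilon u/(2\Delta_f(n_f+1)))T_F(r)$. Integrating, using $T_F(r)\le duT_f(r)+o(T_f(r))$ and $m_f(r,P_{i,j})\le dT_f(r)+o(T_f(r))$ from the first main theorem, and taking $u$ large enough to absorb the Hilbert-weight error into $\epsilon T_f(r)$, yields the stated inequality.

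The hardest step will be the fourth paragraph: coordinating the pointwise algebraic data --- the $\mathcal K$-basis $\{\tilde R_1,\ldots,\tilde R_p\}$ of $I(Y)_u$, the Hilbert-weight-optimal monomials, and the finite family of $\mathcal K$-linear forms they produce --- so that the required linear-algebraic identities hold uniformly outside a \emph{single} proper analytic subset, and verifying that the lifted $F$ is genuinely $\mathcal K$-linearly nondegenerate so that Theorem \ref{4.2} applies. A parallel bookkeeping issue is tracking that every auxiliary constant arising from Lemma \ref{new2} and the passage from $Q_i$ to $P_{i,j}$ stays inside $\mathcal C_{\mathcal K}$, so that its spherical integral contributes only $o(T_f(r))$.
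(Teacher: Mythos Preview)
Your proposal reproduces essentially the proof of Theorem~\ref{1.1}, but the statement you are asked to prove is \emph{not} Theorem~\ref{1.1}: it drops the hypothesis that $f$ has finite algebraic degree over $\mathcal K$. That hypothesis is precisely what makes the Chow/Hilbert weight route work, and it is used in your outline in a way that cannot be removed. Concretely, you write that $V_z$ has ``degree at most $\delta_f$ for generic $z$'' and that $Y_z=\Phi_z(V_z)$ has ``degree at most $d^{n_f}\delta_f$'', and then you invoke Theorem~\ref{2.3}, which requires $u>\delta_z$, and absorb the error term $(2n_f+1)\delta_z/u$ by ``taking $u$ large enough''. All of this collapses when $\delta_f=+\infty$: there is no single integer $u$ with $u>\delta_z$ for all (or even generic) $z$, the error $\delta_z/u$ is not uniformly small, and even the Hilbert function $H_{Y_z}(u)$ need not be constant in $z$, so your lifted map $F$ and the choice of monomials $v_1,\ldots,v_{\xi_u-p}$ lose their uniform meaning.

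The paper's own treatment of this statement (Remark~(c)) explicitly acknowledges this obstruction: it says that to remove the finite-algebraic-degree assumption one must abandon the Evertse--Ferretti weight machinery and instead use the Dethloff--Tan filtration construction of \cite{DT2} (combined with Lemma~\ref{3.2} to produce the replacing hypersurfaces $P_{i,j}$). That filtration argument works directly on $V_z$ without ever needing a uniform bound on $\deg V_z$; the price, as the paper notes, is that one can no longer extract an explicit truncation level. So the gap in your plan is not in the bookkeeping you flag in your last paragraph, but earlier and more structural: you need to replace the fourth paragraph entirely with the Dethloff--Tan filtration approach, keeping paragraphs two and three (the $\Delta_f$ reduction via Lemmas~\ref{3.2} and~\ref{3.1}) as they are.
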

However, by this method, the truncation level $[L(q,\epsilon)]$ can not be given.}
\end{remark} 

\vskip0.2cm
\noindent
{\bf Disclosure statement:} The author states that there is no conflict of interest. 

\vskip0.2cm
 \noindent
\textbf{Acknowledgments.} The author would like to thank the referees for their helpful comments and suggestions on the first version of this paper.
This research is funded by Vietnam National Foundation for Science and Technology Development (NAFOSTED) under the grant number 101.02-2021.12.

\vskip0.2cm
{\footnotesize 
\noindent
{\sc Si Duc Quang}\\
$^1$ Department of Mathematics, Hanoi National University of Education,\\
 136-Xuan Thuy, Cau Giay, Hanoi, Vietnam.\\
$^2$ Thang Long Institute of Mathematics and Applied Sciences,\\
 Nghiem Xuan Yem, Hoang Mai, HaNoi, Vietnam.\\
\textit{E-mail}: quangsd@hnue.edu.vn}

\end{document}